\newtheorem{theorem}{Theorem}[section]
\newtheorem{lemma}[theorem]{Lemma}
\newtheorem{proposition}[theorem]{Proposition}
\newtheorem{corollary}[theorem]{Corollary}
\theoremstyle{definition}
\newtheorem{definition}[theorem]{Definition}
\newtheorem{example}[theorem]{Example}
\theoremstyle{remark}
\newtheorem{remark}[theorem]{Remark}
\numberwithin{equation}{section}
\begin{document}

\setcounter{page}{1}

\title[Algebras of GSIOs]{Algebras of Generalized Singular Integral Operators with Cauchy kernel}

\author[Y.Sang]{Yuanqi Sang}

\address{ 401331, P.R. China.}
\email{\textcolor[rgb]{0.00,0.00,0.84}{yqisang@163.com}}


\let\thefootnote\relax\footnote{}

\subjclass[2010]{Primary 39B82; Secondary 44B20, 46C05.}

\keywords{Singular Integral Operators; $C^{*}-$algebras; }

\begin{abstract}
For bounded Lebesgue measurable functions $f,g,\phi$ and $\psi$ on the unit circle, $P_{+}fP_{+}+P_{-}gP_{+}
+P_{+}\phi P_{-}+P_{-}\psi P_{-}$ is called a generalized singular integral operator (GSIO) on $L^{2}(\mathbb{T})$,
where $P_{+}$ is the Riesz projection, $P_{-}=I-P_{+}.$
In this paper, we relate  GSIOs to
a number of  operators, including Cauchy singular integral operator, (dual) truncated Toeplitz operator, Foguel-Hankel operator, multiplication operator, Toeplitz plus Hankel operator etc.
We establish  the short exact sequences associated  of the $C^{*}-$algebras
generated by GSIOs with bounded or quasi-continuous symbols. As a consequence we obtain  the spectra of various classes of GSIOs, the spectral inclusion theorem and comput the Fredholm index of GSIOs. Moreover, we gave the necessary and sufficient conditions for invertibility(Fredholmness) of  GSIOs via Winer-Hopf factorization.
\end{abstract} \maketitle

\section{Introduction}
Let $\mathbb{D}=\{\xi\in\mathbb{C}:|\xi|<1\}$ be the unit disk in the complex plane $\mathbb{C}$ and  $\mathbb{T}=\{\xi\in\mathbb{C}:|\xi|=1\}$ be its boundary.
Riemann-Hilbert boundary problem \cite{nikolski2020} on the unit circle can be reformulated as follows.

Given functions
$\alpha,\beta,h$ on $\mathbb{T},$ find two analytic functions
$f_{+}\in \text{Hol}(\mathbb{D})$ and $f_{-}\in \text{Hol}(\mathbb{C}\setminus\overline{\mathbb{D}})(f_{-}(\infty)=0)$ such that
\begin{align}\label{rh}
\alpha f_{+}+\beta f_{-}= h
\end{align}
on $\mathbb{T}.$

$H^{2}$ denotes the classical Hardy space of the open unit disk $\mathbb{D}$,
we let $L^{2}=L^{2}(\mathbb{T}),L^{\infty}=L^{\infty}(\mathbb{T})$ denote the usual Lebesgue spaces
on the unit circle \cite{Garnett2007}.
$P_{+}$ is the orthogonal projection from $L^{2}(\mathbb{T})$ onto $H^{2},P_{-}=I-P_{+}.$
Suppose that $h\in L^{2}(\mathbb{T}),f_{+}\in H^{2}$ and
$f_{-}\in L^{2}(\mathbb{T})\ominus H^{2}=\bar{z}\overline{H^{2}}.$
Put
$f=f_{+}+f_{-},$ the equation \eqref{rh}  becomes
\begin{align*}
S_{\alpha,\beta}f=h,\quad \text{where}~
S_{\alpha,\beta}=\alpha P_{+}+\beta P_{-}.
\end{align*}
$S_{\alpha,\beta}$ is called the singular integral operator with Cauchy kernel on $L^{2}(\mathbb{T}),$ and
\begin{align*}
\left(S_{\alpha, \beta} f\right)(z)=\frac{\alpha(z)+\beta(z)}{2} f(z)+\frac{\alpha(z)-\beta(z)}{2} \frac{1}{\pi i} \int_{\mathbb{T}} \frac{f(\xi )}{\xi-z} d \xi.
\end{align*}

Riemann-Hilbert boundary problem is considered solved if one has found conditions
for the operator $S_{\alpha,\beta}$ to be Fredholm or invertible.
Most results about these operators can be found in  \cite{Gohberg1992,Gohberg1992a}.
We are interested in the algebra of singular integral operator, but the adjoint of
$R_{\alpha,\beta}$ is no longer a singular integral operator.
Naturally, one can define the generalized singular
integral operator.

Given a linear space $X,$ we denote by $X_{N}$ the linear space of all $N-$dimensional vectors with components from $X$ and let $X_{N\times N}$ denote the linear space of
$N\times N$ matrices with entries from $X.$
\begin{definition}
If $H={\begin{pmatrix}
\begin{smallmatrix}
   f& \phi\\
   g & \psi \end{smallmatrix}
\end{pmatrix}}\in L_{2\times2}^{2}(\mathbb{T}),$ the generalized singular
integral operator (GSIO) with symbol $H$ is the operator $R_{H}$  is  defined by
\begin{align*}
R_{H}x
=P_{+}fP_{+}x+P_{-}gP_{+}x
+P_{+}\phi P_{-}x+P_{-}\psi P_{-}x.
\end{align*}
for each $x\in L^{2}(\mathbb{T}).$
\end{definition}

The significance of GSIOs comes from the following special cases.
\begin{enumerate}
  \item Multiplication operator
on $L^{2}(\mathbb{T}):$ if $f=g=\phi=\psi,$ then $R_{H}$ is the multiplication operator on $L^{2}(\mathbb{T}).$
\item Hilbert transform: if $f=g=-\phi=-\psi=1,$ then
$R_{\begin{pmatrix}
\begin{smallmatrix}
 1& -1\\
   1 & -1
\end{smallmatrix}
\end{pmatrix}}-1\otimes 1=P_{+}-P_{-}-1\otimes 1$
is the Hilbert transform $\mathbb{H}$ \cite[Ch III]{Garnett2007}.
\item Singular integral operator: if $f=g=\alpha,\phi=\psi=\beta,$ then
$R_{\begin{pmatrix}
\begin{smallmatrix}
 \alpha& \beta\\
   \alpha & \beta
\end{smallmatrix}
\end{pmatrix}}
=S_{\alpha,\beta}$ is the singular integral operator.
T. Nakazi and T. Yamamoto \cite{Nakazi1998,Nakazi1999,Nakazi2000,Nakazi2003,Nakazi2010,Nakazi2014} have study the boundedness and normality of $S_{\alpha,\beta},$ and calculate its norm,
C. Gu \cite{Gu2016} have study the algebraic properties of $S_{\alpha,\beta}.$
\item Toeplitz plus Hankel operators: if $H=\begin{pmatrix}
\begin{smallmatrix}
 f & 0 \\
g & 0
\end{smallmatrix}
\end{pmatrix},$ then $(I\oplus J)R_{H}|_{H^{2}}=T_{f}+\Gamma_{g},$
where $Jx(z)=\bar{z}x(\bar{z})$ for $x\in L^{2}(\mathbb{T}).$
\item Foguel-Hankel operators:
if $\phi\in L^{\infty} $ and $H=\begin{pmatrix}
\begin{smallmatrix}
 \bar{z} & \phi \\
0 & \bar{z}
\end{smallmatrix}
\end{pmatrix},$ then
$R_{H}$
and Foguel-Hankel operator
$\begin{pmatrix}
\begin{smallmatrix}T_{z}^{*}& X\\
		0 &   T_{z}\end{smallmatrix}
\end{pmatrix}$
are unitarily equivalent(see Section 2). Foguel-Hankel  operators closely related to Halmos' problem\cite{Halmos1970}(whether or not any polynomially bounded operator on a Hilbert space $H$ is similar
to a contraction).
J. Bourgain \cite{Bourgain1986} has shown that $R_{H}$ is similar to a contraction if $\phi^{'} \in BMOA$, A. Aleksandrov and V. Peller \cite{Aleksandrov1996} have shown that if $R_{H}$ is polynomially
 bounded then  $\phi^{'} \in BMOA.$ G. Pisier \cite{Pisier1997} and K. Davidson and
 V. Paulsen\cite{Davidson1997} give a negative answer to Halmos' problem via
 vector-Foguel-Hankel operators.
\item (Dual) Truncated Toeplitz operators: let$u$ is an inner function,
suppose $f\in L^{\infty}(\mathbb{T})$ and $H=\begin{pmatrix}
\begin{smallmatrix}
 f & u\bar{f} \\
uf & f
\end{smallmatrix}
\end{pmatrix}$, then
$R_{H}$ is unitary equivalent to the dual truncated Toeplitz operator $D_{f}$\cite{Sang2018,Sang2019,Sang2020}, furthermore, $R_{H}$  is equivalent after extension to truncated Toeplitz operator for invertible symbol \cite[Theorem 6.1]{Camara2020}.
\end{enumerate}

Given a closed unital subalgebra $A\subset L^{\infty}(\mathbb{T}),$
the $C^{*}-$algebra $\mathfrak{R}_{A}$ is defined by
\begin{align*}
\mathfrak{R}_{A}=\text{clos}\left\{ \sum^{n}_{i=1}\prod^{m}_{j=1}R_{H_{ij}}\bigg|
H_{ij}\in A_{2\times 2} \right\}.
\end{align*}
In fact, $\mathfrak{R}_{A}$ equals the $C^{*}-$algebra generated by
$\{R_{\alpha,\beta}\big|\alpha,\beta\in A\}$
and $\{R^{*}_{\phi,\psi}\big|\phi,\psi\in A\}.$
In this paper, we explore the structure of the $C^{*}-$algebra $\mathfrak{R}_{L^{\infty}(\mathbb{T})}.$

The earliest result on  the $C^{*}-$algebra $\mathfrak{R}_{PC(\mathbb{T})}$ due to
Gokhberg and Krupnik\cite{gokhberg1970algebra},
where $PC(\mathbb{T})$ denote the algebra of all piecewise continuous and left
continuous functions on $\mathbb{T}.$
They proved that the sequence
\begin{align*}
&0 \longrightarrow \mathfrak{C}(L^{2}(\mathbb{T})) \longrightarrow \mathfrak{R}_{PC(\mathbb{T})} \longrightarrow \mathscr{S} \longrightarrow 0.
\end{align*}
is exact.
The algebra $\mathscr{S}$ consist of matrix-valued functions of second order
$\mathrm{M}(\mathrm{t}, \mu)=(\alpha_{\mathrm{j} \mathrm{k}}(\mathrm{t}, \mu))_{j, \mathrm{k}}^{2}$ with the following properties:
\begin{itemize}
\item
$\alpha_{11}(t, \mu), \alpha_{22}(t, 1-\mu), \alpha_{12}(t, \mu), \alpha_{21}(t, \mu) \in C(\mathbb{T}\times [0,1]) $,

\item $\alpha_{12}(t, 0)=\alpha_{21}(t, 0)=\alpha_{12}(t, 1)=\alpha_{21}(t, 1)=0 \quad \forall t \in \mathbb{T}.$
\end{itemize}

This paper is organized as follows.
In section 2, we presents some preliminaries and basic properties of GISO.
In section 3 and section 4, we establish  the short exact sequences associated  of the $C^{*}-$algebras
generated by GISO with bounded symbols or quasicontinuous symbols, and obtain
the essential spectrum of GISO  and index forumla.
In section 5,
we establish vector
we obtain the necessary and sufficient conditions for invertibility and Fredholmness of GSIO
via equivalence after extension and Winer-Hopf factorization.
In the last section,
corresponding results apply for the spectrum of singular integral operators, Foguel-Hankel operators and dual truncated Toeplitz operators.
\section{Preliminaries}
The generalized singular
integral operator
 $R_{\begin{pmatrix}
\begin{smallmatrix}
 f & \phi \\
g & \psi
\end{smallmatrix}
\end{pmatrix}}$ can be expressed as an operator matrix with respect to
the decomposition
$L^{2}(\mathbb{T})=H^{2}\oplus \bar{z}\overline{H^{2}},$ the result
is of the form
\begin{gather}\label{m}
\begin{pmatrix}T_{f}& H^{*}_{\bar {\phi}}\\
		H_{g} &   \tilde{T}_{\psi} \end{pmatrix},
\end{gather}
where $T_{f}$ denote the Toeplitz operator on $H^{2}$ such that
\begin{align*}
T_{f}x=P_{+}(f x),\quad  x\in H^{2};
\end{align*}
$H_{g}$ denote the Hankel operator on $H^{2}$ such that
\begin{align*}
H_{g}x=P_{-}(g x),\quad  x\in H^{2};
\end{align*}
$H^{*}_{\bar {\phi}}$ denote the adjoint of Hankel operator  such that
\begin{align*}
H^{*}_{\bar {\beta}}y=P_{+}(\phi y),\quad  y\in \bar{z}\overline{H^{2}};
\end{align*}
$\tilde{T}_{\psi}$ denote the dual Toeplitz operator on $\bar{z}\overline{H^{2}}$ such that
\begin{align*}
 \tilde{T}_{\psi}y=P_{-}(\psi y),\quad  y\in \bar{z}\overline{H^{2}}.
\end{align*}
Converse, if an operator $T$ on $L^{2}(\mathbb{T})$ has form \eqref{m}, then $T$ is a GSIO.
Moreover, the generalized singular
integral operator
 $R_{\begin{pmatrix}
\begin{smallmatrix}
 f & \phi \\
g & \psi
\end{smallmatrix}
\end{pmatrix}}$  is unitarily equivalent to an operator matrix on $H_{2}^{2}.$
To illustrate this, we need to introduce two useful operators and their properties.
For $x\in L^{2},$ define
\begin{align*}
Vx(z)&=\bar{z}\overline{x(z)};\\
Jx(z)&=\bar{z}x(\bar{z}).
\end{align*}
Note that $V$ is an anti-unitary operator and $U$ is an unitary operator, and they have the following properties:
\begin{enumerate}
\item $\langle Vx,Vy\rangle=\langle y,x\rangle,$\quad $
\langle Ux,Uy\rangle=\langle x,y\rangle;$
\item $VM_{f}V=M_{\bar{f}},$\quad $UM_{f}U=M_{\tilde{f}},$ \quad \text{where} $\tilde{f}(z)=f(\bar{z});$
\item $VP_{-}=P_{+}V,$\quad$ UP_{-}=P_{+}U;$
\item $VH^{2}=\bar{z}\overline{H^{2}},$\quad $UH^{2}=\bar{z}\overline{H^{2}};$
\item $Uz^{n}=Vz^{n}=\bar{z}^{n+1}.$
\end{enumerate}
Using the operator $U,$ for $g\in L^{2},$ we can define the Hankel operator on $H^{2}$ by
\[\Gamma_{g}=UH_{g}.\]
The operator
$\begin{pmatrix}I& 0\\
		0 &   U \end{pmatrix}:
L^{2}=H^{2}\oplus \bar{z}\overline{H^{2}}\rightarrow H^{2}\oplus H^{2}$
is unitary.
A simple computation gives
\begin{align*}
&\begin{pmatrix}I& 0\\
		0 &   U \end{pmatrix}
\begin{pmatrix}T_{f}& H^{*}_{\bar {\phi}}\\
		H_{g} &   S_{\psi} \end{pmatrix}
\begin{pmatrix}I& 0\\
		0 &   U \end{pmatrix}\\
=&\begin{pmatrix}T_{f}& H^{*}_{\bar {\phi}}U\\
		UH_{g} &   US_{\psi}U \end{pmatrix}\\
=&\begin{pmatrix}T_{f}& \Gamma^{*}_{\bar {\phi}}\\
		\Gamma_{g} &   UP_{-}M_{\psi}P_{-}U \end{pmatrix}\\
=&\begin{pmatrix}T_{f}& \Gamma^{*}_{\bar {\phi}}\\
		\Gamma_{g} &   P_{+}UM_{\psi}UP_{+} \end{pmatrix}\\
=&\begin{pmatrix}T_{f}& \Gamma^{*}_{\bar {\phi}}\\
		\Gamma_{g} &   P_{+}M_{\tilde{\psi}}P_{+} \end{pmatrix}\\
=&\begin{pmatrix}T_{f}& \Gamma^{*}_{\bar {\phi}}\\
		\Gamma_{g} &   T_{\tilde{\psi}} \end{pmatrix}\\
=&\begin{pmatrix}T_{f}& \Gamma_{\tilde{\phi}}\\
		\Gamma_{g} &   T_{\tilde{\psi}} \end{pmatrix}.
\end{align*}
This shows that
the operator $R_{\begin{pmatrix}
\begin{smallmatrix}
 f & \phi \\
g & \psi
\end{smallmatrix}
\end{pmatrix}} :L^{2}\rightarrow L^{2}$ is unitary equivalent to
\begin{align*}
\begin{pmatrix}T_{f}& \Gamma_{\tilde{\phi}}\\
		\Gamma_{g} &   T_{\tilde{\psi}} \end{pmatrix}
:H^{2}\oplus H^{2}\rightarrow H^{2}\oplus H^{2}.
\end{align*}

Therefore, $R(\bar{z},0,\phi,\bar{z})$ is unitary equivalent to the Foguel-Hankel operator\cite{Bourgain1986}
\begin{align*}
\begin{pmatrix}T^{*}_{z}& \Gamma_{\tilde{\phi}}\\
		0 &   T_{z} \end{pmatrix}.
\end{align*}
\begin{example}
For $\alpha,\beta\in L^{\infty},$ the truncated singular integral operator
\begin{align*}
S^{u}_{\alpha,\beta}x=\alpha P_{u}x+\beta Q_{u}x,\quad x\in L^{2}.
\end{align*}
It can be write as an operator matrix
with respect to
the decomposition
$L^{2}(\mathbb{T})=H^{2}\oplus \bar{z}\overline{H^{2}},$
\begin{align*}
&\begin{pmatrix}T_{\alpha}+T_{(\beta-\alpha)u}T_{\bar{u}}& H^{*}_{\bar {\beta}}\\
		H_{\alpha}+H_{(\beta-\alpha)u}T_{\bar{u}} &   S_{\beta} \end{pmatrix}\\
=&\begin{pmatrix}T_{\alpha}& H^{*}_{\bar {\beta}}\\
		H_{\alpha} &   S_{\beta} \end{pmatrix}+
\begin{pmatrix}T_{(\beta-\alpha)u}T_{\bar{u}}& 0\\
		H_{(\beta-\alpha)u}T_{\bar{u}} &   0\end{pmatrix}\\
=&\begin{pmatrix}T_{\alpha}& H^{*}_{\bar {\beta}}\\
		H_{\alpha} &   S_{\beta} \end{pmatrix}+
\begin{pmatrix}T_{(\beta-\alpha)u}& 0\\
		H_{(\beta-\alpha)u} &   0\end{pmatrix}
\begin{pmatrix}T_{\bar{u}}& 0\\
		0 &   I\end{pmatrix}
\end{align*}
\end{example}
\begin{example}
Asymmetric dual truncated Toeplitz operator
$D^{\theta,\alpha}_{\phi}:(K^{2}_{\theta})^{\perp}\rightarrow (K^{2}_{\alpha})^{\perp}$
is unitarily equivalent to
some general singular
integral operator.
Let $h,g\in H^{2},$ we have
\begin{align*}
D^{\theta,\alpha}_{\phi}(\theta h+\bar{z}\bar{g})
&=(P_{-}+\alpha P_{+}\bar{\alpha})\phi (\theta h+\bar{z}\bar{g})\\
&=\alpha P_{+}\bar{\alpha}\phi\theta P_{+}h
+P_{-}\phi\theta P_{+}h
+\alpha P_{+}\bar{\alpha}\phi \bar{z}\bar{g}
+P_{-}\phi \bar{z}\bar{g}
\end{align*}
or
\begin{align*}
D^{\theta,\alpha}_{\phi}
\begin{pmatrix}M_\theta & 0\\
  0 &  I\end{pmatrix}
\binom{h}{\bar{z}\bar{g}}
&=\begin{pmatrix}M_\alpha & 0\\
  0 &  I\end{pmatrix}
\begin{pmatrix}T_{\bar{\alpha}\theta\phi} & H^{*}_{\bar{\phi}\alpha}\\
H_{\phi\theta} &  S_{\phi}\end{pmatrix}
\binom{h}{\bar{z}\bar{g}}
\end{align*}.
Hence
\begin{align*}
D^{\theta,\alpha}_{\phi}
&=\begin{pmatrix}M_\alpha & 0\\
  0 &  I\end{pmatrix}
\begin{pmatrix}T_{\bar{\alpha}\theta\phi} & H^{*}_{\bar{\phi}\alpha}\\
H_{\phi\theta} &  S_{\phi}\end{pmatrix}
\begin{pmatrix}M_{\bar{\theta}} & 0\\
  0 &  I\end{pmatrix},
\end{align*}
where $\begin{pmatrix}M_{\bar{\theta}} & 0\\
  0 &  I\end{pmatrix}:(K^{2}_{\theta})^{\perp}\rightarrow L^{2}$ and
$\begin{pmatrix}M_{\bar{\theta}} & 0\\
  0 &  I\end{pmatrix}: L^{2}\rightarrow(K^{2}_{\alpha})^{\perp} $
are unitary.
\end{example}

We begin our study of GSIO by considering some elementary properties.
\begin{proposition}
Let $H={\tiny\begin{pmatrix}
   f& \phi\\
   g & \psi \end{pmatrix}}\in L_{2\times2}^{2}(\mathbb{T}).$
\begin{enumerate}
\item  $R_{H}$ is bounded on $L^{2}(\mathbb{T})$ if and only if $f,\psi\in L^{\infty}$ and $g_{-},(\bar{\phi})_{-}\in \mathrm{BMO}(\mathbb{T}).$ Where $\mathrm{BMO}(\mathbb{T})=L^{\infty}(\mathbb{T})+\mathbb{H} L^{\infty}(\mathbb{T}).$

\item If $R_{H}$ is bounded,  then $R_{H}$ is zero if and only if $f=\psi=0$ and $g,\bar{\phi}\in H^{2}.$

\item If $R_{H}$ is bounded, then $R_{H}$ is compact if and only if $f=\psi=0$ and $g,\bar{\phi}\in H^{\infty}+C(\mathbb{T}).$

\item If $R_{H}$ is bounded, then $R(f,g,\phi,\psi)$ is
self-adjoint if and only if
$f$ and $\psi$ are real valued, and $g-\bar{\phi}\in H^{2}$.

\item If $R_{H}$ is bounded and positive, then
$f$ and $\psi$ are positive  and $g-\bar{\phi}\in H^{2}$.

\item If $R_{H}$ is bounded, then $R_{H}$ is
complex symmetric operator for $V$ if and only if
$f=\psi,$ where $Vf(z)=\bar{z}\bar{f}(z).$
\begin{proof}
(1)-(3) Clearly $R_{H}$ is  bounded (resp. zero, compact) if and only if
$T_{f},H^{*}_{\bar {\phi}},H_{g}$ and $S_{\psi}$ are bounded (resp. zero, compact).
Toeplitz operator $T_{a}$ is bounded \cite[7.8]{Douglas1998} (resp., zero, compact\cite[p.94]{Brown1963/64}) if and only if its symbol $a$ is bounded(resp., zero, zero), Hankel operator $H_{a}$ is bounded\cite[Theorem 1.3]{Peller2003}(resp., zero, compact \cite[Theorem 5.5]{Peller2003})
if and only if $a_{-}\in BMO$ (resp.,$a\in H^{2},a\in H^{\infty}+C(\mathbb{T})$),
the conclusion follows.

(4) By the matrix represention \eqref{m},
we have $R_{H}$ is self-adjoint if and only if
\begin{align*}
\begin{pmatrix}T_{f}& H^{*}_{\bar {\phi}}\\
		H_{g} &   S_{\psi} \end{pmatrix}
=\begin{pmatrix}T_{\bar{f}}& H^{*}_{g}\\
		H_{\bar {\phi}} &   S_{\bar{\psi}} \end{pmatrix}
\end{align*}
if and only if
$T_{f}=T_{\bar{f}},H_{g}=H_{\bar{\phi}}$ and $S_{\psi}=S_{\bar{\psi}}.$
$T_{f}=T_{\bar{f}}$ is equivalent to $f$ is real,
$H_{g}=H_{\bar{\phi}}$ is equivalent to $g-\bar{\phi}\in H^{2}$,
$S_{\psi}=S_{\bar{\psi}}$ is equivalent to $\psi$ is real.

(4) If $R_{H}$ is
positive, then
\begin{equation}\label{ber}
\begin{split}
0 & \leq\langle R_{H}k_{z},k_{z}\rangle\\
&=\langle (P_{+}fP_{+}+P_{-}g P_{+}
+P_{+}\phi P_{-}+P_{-}\psi P_{-})k_{z},k_{z}\rangle\\
&=\langle (P_{+}f P_{+}+P_{-}g P_{+}
+P_{+}\phi P_{-}+P_{-}\psi P_{-})P_{+}k_{z},P_{+}k_{z}\rangle\\
&=\langle P_{+}(P_{+}f P_{+}+P_{-}g P_{+}
+P_{+}\phi P_{-}+P_{-}\psi P_{-})P_{+}k_{z},k_{z}\rangle\\
&=\langle P_{+}f P_{+}k_{z},k_{z}\rangle\\
&=\langle f k_{z},k_{z}\rangle\\
&=\int^{2\pi}_{0}f(e^{i\theta})|k_{z}(e^{i\theta})|^{2}\frac{d\theta}{2\pi},
\end{split}
\end{equation}
where $k_{z}(\omega)=\frac{\sqrt{1-|z|^{2}}}{1-\bar{z}\omega}$ is
the normalized reproducing kernel of $H^{2}.$
The last equality is the Poisson integral of $f$,
so $f$ is positive almost everywhere on $\mathbb{T}.$
Similarly,
\begin{align*}
0 & \leq\langle R(f,g,\phi,\psi)\bar{z}\bar{k}_{z},\bar{z}\bar{k}_{z}\rangle\\
& =\langle (P_{+}fP_{+}+P_{-}g P_{+}
+P_{+}\phi P_{-}+P_{-}\psi P_{-})P_{-}\bar{z}\bar{k}_{z},P_{-}\bar{z}\bar{k}_{z}\rangle\\
& =\langle P_{-}\psi P_{-}\bar{z}\bar{k}_{z},P_{-}\bar{z}\bar{k}_{z}\rangle\\
& =\langle \psi k_{z},k_{z}\rangle\\
&=\int^{2\pi}_{0}\psi(e^{i\theta})|k_{z}(e^{i\theta})|^{2}\frac{d\theta}{2\pi},
\end{align*}
so $\psi$ is positive almost everywhere on $\mathbb{T}.$
Since positive opertor is self-adjoint and (4),$g-\bar{\phi}\in H^{2}.$

(5)
By the definition of complex symmetric operator \cite{Garcia2006}, we have
$R_{H}$ is complex symmetric with the conjugation $V$ if and only if
$VR_{H}V=R^{*}_{H}.$
Using the properties of $V$ yields
\begin{equation}\label{VRV}
\begin{split}
&VR_{H}V\\
=&V(P_{+}fP_{+}+P_{-}g P_{+}
+P_{+}\phi P_{-}+P_{-}{\psi}P_{-})V\\
=&VP_{+}fP_{+}V+VP_{-}gP_{+}V
+VP_{+}\phi P_{-}V+VP_{-}{\psi}P_{-}V\\
=&P_{-}VfVP_{-}+P_{+}VgVP_{-}
+P_{-}V\phi VP_{+}+P_{+}V{\psi}VP_{+}\\
=&P_{-}\bar{f}P_{-}+P_{+}\bar{g}P_{-}
+P_{-}\bar{\phi}P_{+}+P_{+}\bar{\psi}P_{+}\\
=&\begin{pmatrix}T_{\bar{\psi}}& H^{*}_{g}\\
H_{\bar{\phi}} &   \tilde{T}_{\bar{f}} \end{pmatrix}.
\end{split}
\end{equation}
On the other hand,
$R^{*}_{H}
=\begin{pmatrix}T_{\bar{f}}& H^{*}_{g}\\
		H_{\bar {\phi}} &   S_{\bar{\psi}} \end{pmatrix}.$
It follows that
$VR_{H}V=R^{*}_{H}$ holds  if and only if
$T_{\bar{f}}=T_{\bar{\psi}}$ and $S_{\bar{f}}=S_{\bar{\psi}}$ hold
if and only if
$f=\psi.$
\end{proof}
\end{enumerate}
\end{proposition}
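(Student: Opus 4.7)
The overarching strategy is to exploit the matrix decomposition \eqref{m}, which writes $R_H$ as
$\begin{pmatrix} T_f & H^*_{\bar\phi} \\ H_g & \tilde T_\psi \end{pmatrix}$
with respect to $L^2(\mathbb{T}) = H^2 \oplus \bar z \overline{H^2}$. Since $H^2$ and $\bar z\overline{H^2}$ are orthogonal, a property like boundedness, vanishing, compactness, or self-adjointness of $R_H$ is equivalent to the corresponding property holding entrywise. So the game is to translate each condition on $R_H$ into four (or three, after combining) conditions on $T_f$, $\tilde T_\psi$, $H_g$, $H^*_{\bar\phi}$, and then quote the corresponding classical characterization for Toeplitz and Hankel operators. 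The dual Toeplitz $\tilde T_\psi$ can be unitarily transported to the ordinary Toeplitz $T_{\tilde\psi}$ via the $U$ of Section~2, so its behavior follows from that of $T_{\tilde\psi}$.

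For (1)--(3), the plan is to apply the standard dictionary: $T_a$ is bounded (resp.\ zero, compact) iff $a \in L^\infty$ (resp.\ $a=0$, $a=0$), and $H_a$ is bounded (resp.\ zero, compact) iff $a_- \in \mathrm{BMO}$ (resp.\ $a \in H^2$, $a \in H^\infty + C(\mathbb{T})$); the same statements apply to $H^*_{\bar\phi}$ with symbol $\phi$ after noting $(H^*_{\bar\phi})^* = H_{\bar\phi}$. Combining the four entrywise conditions and using $H_\psi=0$ is subsumed by $\psi = 0$ where relevant, this yields the three characterizations. The only care is to verify that boundedness of each entry really is necessary (not just sufficient) for boundedness of the $2\times 2$ matrix, which follows from the fact that each entry is $P_\pm R_H P_\pm$ restricted appropriately.

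For (4), I would compute $R_H^*$ using the matrix representation and match entries: self-adjointness forces $T_f = T_{\bar f}$, $\tilde T_\psi = \tilde T_{\bar\psi}$, and $H_g = H_{\bar\phi}$. The first two give $f,\psi$ real (Toeplitz symbols are unique on $L^\infty$), and the third gives $g - \bar\phi \in \ker H_\cdot = H^2$. For (5), the plan is to test positivity against the normalized reproducing kernel $k_z \in H^2$, which is annihilated by $P_-$, so that $\langle R_H k_z, k_z\rangle$ collapses to $\langle T_f k_z, k_z\rangle$, i.e.\ the Poisson integral of $f$; positivity of the Poisson integral at every $z\in\mathbb{D}$ forces $f \geq 0$ a.e. Testing against $\bar z\bar k_z \in \bar z\overline{H^2}$ similarly yields $\psi \geq 0$; the condition $g-\bar\phi \in H^2$ then comes for free from (4) since positive operators are self-adjoint.

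For (6), the main computation is $V R_H V$, using the key properties $VP_- = P_+V$, $VP_+ = P_-V$, and $VM_fV = M_{\bar f}$. Pushing $V$ through the four summands defining $R_H$ swaps projections and conjugates symbols, yielding an operator whose matrix representation is $\begin{pmatrix} T_{\bar\psi} & H^*_g \\ H_{\bar\phi} & \tilde T_{\bar f}\end{pmatrix}$. Comparing with $R_H^* = \begin{pmatrix} T_{\bar f} & H^*_g \\ H_{\bar\phi} & \tilde T_{\bar\psi}\end{pmatrix}$, the Hankel entries match automatically, and the two Toeplitz diagonal conditions both collapse to $f = \psi$. The main obstacle throughout is being precise about which ``symbol uniqueness'' result one invokes—$T_a=0 \Leftrightarrow a=0$ for Toeplitz but $H_a = 0 \Leftrightarrow a \in H^2$ for Hankel—so that the combined entrywise conditions are stated correctly; this is conceptually minor but is where mistakes are easiest to make.
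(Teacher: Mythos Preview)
Your proposal is correct and follows essentially the same approach as the paper: both reduce everything to the $2\times 2$ block matrix \eqref{m}, invoke the standard boundedness/vanishing/compactness dictionary for Toeplitz and Hankel operators in (1)--(3), match matrix entries for self-adjointness in (4), test positivity against $k_z$ and $\bar z\bar k_z$ via the Poisson integral in (5), and compute $VR_HV$ term-by-term using $VP_\pm = P_\mp V$ and $VM_fV=M_{\bar f}$ in (6). The paper's proof is exactly this, with the same reproducing-kernel computation and the same $VR_HV$ calculation you outline.
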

\section{$C^{*}-$algebras $\mathfrak{R}_{L^{\infty}}$}
Recall the $C^{*}-$algebra $\mathfrak{R}_{L^{\infty}}$ is defined by
\begin{align*}
\mathfrak{R}_{L^{\infty}}=\text{clos}\left\{ \sum^{n}_{i=1}\prod^{m}_{j=1}R_{H_{ij}}\bigg|
H_{ij}\in L^{\infty}_{2\times 2}(\mathbb{T}) \right\}.
\end{align*}

Let $\mathfrak{SR}_{L^{\infty}}$ be the closed ideal of
$\mathfrak{R}_{L^{\infty}}$ generated by operators of
the form
\begin{align}\label{semi}
R_{\begin{pmatrix}
\begin{smallmatrix}
 f_{1} & \phi_{1} \\
g_{1} & \psi_{1}
\end{smallmatrix}
\end{pmatrix}}
R_{\begin{pmatrix}
\begin{smallmatrix}
 f_{2} & \phi_{2} \\
g_{2} & \psi_{2}
\end{smallmatrix}
\end{pmatrix}}-
R_{\begin{pmatrix}\begin{smallmatrix}
 f_{1}f_{2} & \phi \\
g & \psi_{1}\psi_{2}
\end{smallmatrix}
\end{pmatrix}}
\end{align}
where$f_{i},g_{i},\phi_{i},\psi_{i},
g,\phi$ are in $L^{\infty}(\mathbb{T})(i=1,2).$
Furthermore, the $C^{*}-$algebra $\mathfrak{R}_{L^{\infty}}$ equals the algebra generated by Riesz projection and all multiplication operators with $L^{\infty}(\mathbb{T})$ symbols, i.e.
\begin{align*}
\mathfrak{R}_{L^{\infty}}=\text{clos span}\left\{P,M_{\phi}|
\phi\in L^{\infty}(\mathbb{T}) \right\}.
\end{align*}
Next,we will establish the symbol map of $\mathfrak{R}_{L^{\infty}}$ with the normalized reproducing kernel of $H^{2}.$
\begin{lemma}\label{keylem}
Let $H_{i}=\begin{pmatrix}
\begin{smallmatrix}
 f_{i} & \phi_{i} \\
g_{i} & \psi_{i}
\end{smallmatrix}
\end{pmatrix}\in \bigcap_{p\geq1}L_{2\times 2}^{p}(\mathbb{T}),i\in \mathbb{Z}_{+}.$
\begin{enumerate}
\item The radial limit
\begin{align*}
\lim_{r\rightarrow 1^{-}}\big\langle R_{H_{1}}\cdots R_{H_{m}}k_{r\xi},k_{r\xi}\big\rangle= f_{1}(\xi)\cdots f_{m}(\xi)\quad a.e. ~on ~\mathbb{T}.
\end{align*}

\item The radial limit
\begin{align*}
\lim_{r\rightarrow 1^{-}}\big\langle R_{H_{1}}\cdots R_{H_{m}}\bar{z}\bar{k}_{r\xi},\bar{z}\bar{k}_{r\xi}\big\rangle= \psi_{1}(\xi)\cdots \psi_{m}(\xi) \quad a.e. ~on~  \mathbb{T}.
\end{align*}

\item If $g,\phi\in \cap_{p\geq1}L^{p}(\mathbb{T}),$ then
$\prod_{i=1}^{n}R_{H_i}-
R_{\begin{pmatrix}\begin{smallmatrix}
 \Pi_{i=1}^{n}f_{i} & \phi \\
g & \Pi_{i=1}^{n}\psi_{i}
\end{smallmatrix}
\end{pmatrix}}
\in \mathfrak{SR}_{L^{\infty}}.$
		
\item If $T\in \mathfrak{SR}_{L^{\infty}},$ then
		\begin{align*}
			\lim_{r\rightarrow 1}\langle Tk_{r\xi},k_{r\xi}\rangle & =0,\\
           \lim_{r\rightarrow 1}\langle T\bar{z}\bar{k}_{r\xi},\bar{z}\bar{k}_{r\xi}\rangle & =0.
		\end{align*}
\item  The uniform limit of {\rm GSIO} is also a {\rm GSIO}.
\end{enumerate}
\end{lemma}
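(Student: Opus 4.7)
The plan is to establish (1) first, obtain (2) by the symmetric argument, prove (3) by induction on $n$, derive (4) from (1)--(2) by density, and verify (5) via closedness of each block in \eqref{m}.

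The analytic heart of (1) and (2) is the reproducing-kernel estimate: for every $h\in L^{2}(\mathbb{T})$,
\begin{equation*}
\|P_{-}(h\,k_{r\xi})\|_{L^{2}}\longrightarrow 0\quad\text{as }r\to 1^{-},\ \text{for a.e.\ }\xi\in\mathbb{T}.
\end{equation*}
This follows from the sandwich $|\langle h\,k_{r\xi},k_{r\xi}\rangle|^{2}\le\|P_{+}(h\,k_{r\xi})\|^{2}\le\|h\,k_{r\xi}\|^{2}$, whose outer members are Poisson integrals of $h$ and $|h|^{2}$ at $r\xi$ and both converge a.e.\ to $|h(\xi)|^{2}$ by Fatou. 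Together with $\|(h-h(\xi))k_{r\xi}\|_{L^{2}}\to 0$ at Lebesgue points and the collapse $P_{-}k_{r\xi}=0$ (which kills two of the four terms of $R_{H}$), this yields the single-operator approximation $R_{H}k_{r\xi}=f(\xi)k_{r\xi}+o(1)_{L^{2}}$ a.e. I then induct on $m$: if $R_{H_{2}}\cdots R_{H_{m}}k_{r\xi}=C\,k_{r\xi}+\epsilon_{r}$ with $C=\prod_{i=2}^{m}f_{i}(\xi)$ and $\|\epsilon_{r}\|_{L^{2}}\to 0$ a.e., then
\begin{equation*}
\langle R_{H_{1}}(C\,k_{r\xi}+\epsilon_{r}),k_{r\xi}\rangle=C\!\int\! f_{1}|k_{r\xi}|^{2}dm+\langle P_{+}\epsilon_{r},\bar f_{1}k_{r\xi}\rangle+\langle P_{-}\epsilon_{r},\bar\phi_{1}k_{r\xi}\rangle,
\end{equation*}
the $g_{1}$- and $\psi_{1}$-contributions vanishing by $\langle P_{-}(\,\cdot\,),k_{r\xi}\rangle=0$. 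Cauchy--Schwarz with $\|\bar f_{1}k_{r\xi}\|,\|\bar\phi_{1}k_{r\xi}\|$ a.e.\ uniformly bounded by the Poisson $L^{2}$-means of $|f_{1}|,|\phi_{1}|$ sends both error terms to $0$, giving (1). Statement (2) is identical with $\bar z\bar k_{r\xi}\in \bar z\overline{H^{2}}$ in place of $k_{r\xi}$, after the swaps $P_{+}\leftrightarrow P_{-}$, $f\leftrightarrow\psi$, $T_{f}\leftrightarrow \tilde T_{\psi}$.

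Part (3) is by induction on $n$: the case $n=2$ is the defining form \eqref{semi} of the generators. For $n>2$, the inductive hypothesis combined with two-sidedness of $\mathfrak{SR}_{L^{\infty}}$ replaces $\prod_{i=2}^{n}R_{H_{i}}$ modulo the ideal by a GSIO with diagonal $(\prod_{i=2}^{n}f_{i},\prod_{i=2}^{n}\psi_{i})$, after which the $n=2$ case applies; the choice $H_{1}=H_{2}=0$ in \eqref{semi} shows that every GSIO with zero diagonal already belongs to $\mathfrak{SR}_{L^{\infty}}$, absorbing the freedom in the off-diagonal entries. Part (4) is then immediate from (1)--(2): for each generator \eqref{semi} the two summands share the a.e.\ radial Berezin-transform limits $f_{1}f_{2}(\xi)$ and $\psi_{1}\psi_{2}(\xi)$ against $k_{r\xi}$ and $\bar z\bar k_{r\xi}$ respectively, so the difference has vanishing radial limits; the cancellation persists after left- and right-multiplication by finite products of $R_{H_{ij}}$'s by applying (1)--(2) to each resulting product, and linearity together with the uniform bound $|\langle Tk_{r\xi},k_{r\xi}\rangle|\le\|T\|$ (combined with a countable approximation to union the relevant null sets) extends the conclusion to the norm closure. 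Finally, (5) follows from \eqref{m}: the classes of bounded Toeplitz, dual-Toeplitz, and Hankel operators are each norm-closed in $\mathcal{B}(H^{2})$ (via the isometry $f\mapsto T_{f}$ and Nehari's theorem), so a uniform limit of GSIOs preserves the block form.

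The main obstacle I anticipate is the inductive step for (1) when $H_{1}\notin L^{\infty}_{2\times 2}$: $R_{H_{1}}$ need not be bounded on $L^{2}$, so $\|R_{H_{1}}\epsilon_{r}\|$ is \emph{a priori} uncontrolled. The remedy, used above, is to compute the scalar $\langle R_{H_{1}}\epsilon_{r},k_{r\xi}\rangle$ directly rather than passing through any norm estimate on $R_{H_{1}}\epsilon_{r}$: the orthogonality $\langle P_{-}(\,\cdot\,),k_{r\xi}\rangle=0$ eliminates the $g_{1}$- and $\psi_{1}$-contributions entirely, and the surviving $f_{1}$- and $\phi_{1}$-pairings involve only the a.e.\ uniformly bounded quantities $\|\bar f_{1}k_{r\xi}\|$, $\|\bar\phi_{1}k_{r\xi}\|$.
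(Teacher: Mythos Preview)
Your argument for (1) has a structural gap in the induction. You take as inductive hypothesis the \emph{norm} approximation
\[
R_{H_{2}}\cdots R_{H_{m}}k_{r\xi}=C\,k_{r\xi}+\epsilon_{r},\qquad \|\epsilon_{r}\|_{L^{2}}\to 0,
\]
but your inductive step only establishes the \emph{inner-product} limit $\langle R_{H_{1}}\cdots R_{H_{m}}k_{r\xi},k_{r\xi}\rangle\to\prod f_{i}(\xi)$, not the norm approximation for the length-$m$ product. Hence the hypothesis is not propagated and the induction cannot proceed beyond $m=2$. Your ``remedy'' in the last paragraph correctly handles the outermost factor $R_{H_{1}}$ without boundedness, but the very existence of a small $\epsilon_{r}$ at stage $m-1$ already required boundedness of $R_{H_{2}}$ at the previous step, so the problem is merely pushed down one level. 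The fix, in the $L^{\infty}$ setting, is simply to induct on the norm approximation itself: if each $R_{H_{i}}$ is bounded, then $R_{H_{1}}(Ck_{r\xi}+\epsilon_{r})=Cf_{1}(\xi)k_{r\xi}+o(1)_{L^{2}}$ directly, and (1) follows as a corollary. (The paper's own proof also tacitly invokes boundedness, in estimating $|\langle R_{H_{2}}\cdots R_{H_{n}}k_{r\xi},P_{-}\bar\phi_{1}k_{r\xi}\rangle|\le\|R_{H_{2}}\cdots R_{H_{n}}\|\,\|P_{-}\bar\phi_{1}k_{r\xi}\|$, so the $\bigcap_{p}L^{p}$ generality in the statement is not fully honoured by either argument.)

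With that repair your route for (1) is genuinely different from the paper's and somewhat more direct. The paper keeps the weaker inner-product statement as IH and, in the inductive step, splits $f_{1}=f_{1+}+f_{1-}$, uses the reproducing identity $P_{+}(\overline{h}\,k_{z})=\overline{h(z)}\,k_{z}$ for $h\in H^{2}$ to peel off the $f_{1+}$ part, and then rewrites the $f_{1-}$ contribution as $\langle R_{H_{2}'}R_{H_{3}}\cdots R_{H_{n}}k_{r\xi},k_{r\xi}\rangle$ for a modified symbol $H_{2}'$, so that the IH applies again. Your sandwich argument $|\langle hk_{r\xi},k_{r\xi}\rangle|^{2}\le\|P_{+}(hk_{r\xi})\|^{2}\le\|hk_{r\xi}\|^{2}$ gives $\|P_{-}(hk_{r\xi})\|\to 0$ more cleanly than the paper's explicit computation, and it upgrades the base case to a norm statement, after which the induction (once stated correctly) is a one-liner; the price is that you must assume boundedness throughout rather than only at the last factor. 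For (2) the paper makes the $P_{+}\!\leftrightarrow\!P_{-}$ symmetry explicit via the conjugation $V$, reducing to (1); your swap argument is the same idea. Parts (3)--(5) follow the paper closely; your observation that setting $H_{1}=H_{2}=0$ in \eqref{semi} forces every zero-diagonal GSIO into $\mathfrak{SR}_{L^{\infty}}$ is a nice way to justify the arbitrariness of $g,\phi$ in (3), which the paper leaves implicit. For (5) the paper uses the algebraic characterisations $T_{\bar z}TT_{z}=T$ and $HT_{z}=\tilde T_{z}H$, which are manifestly closed conditions; your appeal to the isometry $f\mapsto T_{f}$ and Nehari is equivalent.
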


\begin{proof}
	(1)
	We will prove this lemma by induction on $m.$
	For $m=1$, applying \eqref{ber},  we obtain
\begin{align*}
\langle R_{H_{1}} k_{r\xi},k_{r\xi}\rangle
=\int^{2\pi}_{0}f_{1}(e^{i\theta})|k_{r\xi}(e^{i\theta})|^{2}\frac{d\theta}{2\pi}
\end{align*}
where $|k_{r\xi}|^{2}$ is the Poisson kernel for $r\xi \in\mathbb{D}.$
By Fatou's theorem,
\begin{align*}
\lim_{r\rightarrow 1}\langle R_{H_{1}} k_{r\xi},k_{r\xi}\rangle=f_{1}(\xi)
\end{align*}
for almost all $\xi \in\mathbb{T}.$

Let $m\geq2,$ assume the result true up to $n-1.$
A simple computation gives
\begin{align*}
&\langle R_{H_{1}}R_{H_{2}}\cdots R_{H_{n}}k_{r\xi},k_{r\xi}\rangle
=\langle R_{H_2}\cdots R_{H_n}k_{r\xi},R^{*}_{H_1}k_{r\xi}\rangle\\
=&\langle R_{H_2}\cdots R_{n}k_{r\xi},(P_{+}\bar{f_{1}}P_{+}+P_{+}\bar{g}_{1}P_{-}
+P_{-}\bar{\phi}_{1}P_{+}+P_{-}\bar{\psi_{1}}P_{-})k_{r\xi}\rangle\\
=&\langle R_{H_2}\cdots R_{H_n}k_{r\xi},(P_{+}\bar{f_{1}}P_{+}
+P_{-}\bar{\phi}_{1}P_{+})k_{r\xi}\rangle\\
=&\langle R_{H_2}\cdots R_{H_n}k_{r\xi},P_{+}\bar{f_{1}}P_{+}k_{r\xi}\rangle
+\langle R_{H_2}\cdots R_{H_n}k_{r\xi},P_{-}\bar{\phi}_{1}P_{+}k_{r\xi}\rangle\\
=&\langle R_{H_2}\cdots R_{H_n}k_{r\xi},P_{+}\bar{f_{1}}k_{r\xi}\rangle
+\langle R_{H_2}\cdots R_{H_n}k_{r\xi},P_{-}\bar{\phi}_{1}k_{r\xi}\rangle\\
=&\langle R_{2}\cdots R_{H_n}k_{r\xi},P_{+}(\bar{f}_{1+}+\bar{f}_{1-})k_{r\xi}\rangle
+\langle R_{H_2}\cdots R_{H_n}k_{r\xi},P_{-}\bar{\phi}_{1}k_{r\xi}\rangle\\
=&\langle R_{H_2}\cdots R_{H_n}k_{r\xi},\bar{f}_{1+}(r\xi)k_{r\xi}+P_{+}\bar{f}_{1-}k_{r\xi}\rangle
+\langle R_{H_2}\cdots R_{H_n}k_{r\xi},P_{-}\bar{\phi}_{1}k_{r\xi}\rangle\\
=&f_{1+}(r\xi)\langle R_{H_2}\cdots R_{H_n}k_{r\xi},k_{r\xi}\rangle
+\langle R_{H_2}\cdots R_{H_n}k_{r\xi},P_{+}\bar{f}_{1-}k_{r\xi}\rangle\\
&+\langle R_{H_2}\cdots R_{H_n}k_{r\xi},P_{-}\bar{\phi}_{1}k_{r\xi}\rangle,
\end{align*}
where $f_{1+}=P_{+}f_{1},f_{1-}=P_{-}f_{1}.$
Note that
\begin{align*}
&\langle R_{H_2}\cdots R_{H_n}k_{r\xi},P_{+}\bar{f}_{1-}k_{r\xi}\rangle
=\langle f_{1-}P_{+}R_{H_2}\cdots R_{H_n}k_{r\xi},k_{r\xi}\rangle\\
=&\langle f_{1-}P_{+}R_{H_2}\cdots R_{H_n}k_{r\xi},P_{+}k_{r\xi}\rangle
=\langle P_{+}f_{1-}P_{+}R_{H_2}\cdots R_{H_n}k_{r\xi},k_{r\xi}\rangle\\
=&\langle P_{+}f_{1-}P_{+}(P_{+}f_{2}P_{+}+P_{-}g_{2}P_{+}
+P_{+}{\phi}_{2}P_{-}+P_{-}\psi_{2}P_{-})R_{3}\cdots R_{n}k_{r\xi},k_{r\xi}\rangle\\
=&\langle (P_{+}f_{1-}P_{+}f_{2}P_{+}
+P_{+}f_{1-}P_{+}{\phi}_{2}P_{-})R_{H_3}\cdots R_{H_n}k_{r\xi},k_{r\xi}\rangle\\
=&\langle (P_{+}f_{1-}f_{2}P_{+}
+P_{+}f_{1-}{\phi}_{2}P_{-})R_{H_3}\cdots R_{H_n}k_{r\xi},k_{r\xi}\rangle\\
=&\langle R_{\begin{pmatrix}
\begin{smallmatrix}
 f_{2}f_{1-} & {\phi}_{2}f_{1-} \\
0 & 0
\end{smallmatrix}
\end{pmatrix}}R_{H_3}\cdots R_{H_n}k_{r\xi},k_{r\xi}\rangle,
\end{align*}
\begin{align*}
|\langle R_{H_2}\cdots R_{H_n}k_{r\xi},P_{-}\bar{\phi}_{1}k_{r\xi}\rangle|
&\leq\|R_{H_2}\cdots R_{H_n}\|\|k_{r\xi}\|\|P_{-}\bar{\phi}_{1}k_{r\xi}\|
\end{align*}
and
\begin{align*}
&\|P_{-}\bar{\phi}_{1}k_{r\xi}\|\\
=&\|P_{-}(\bar{\phi}_{1+}+\bar{\phi}_{1-})k_{r\xi}\|\\
=&\|P_{-}\bar{\phi}_{1+}k_{r\xi}\|\\
=&\|(I-P_{+})(\bar{\phi}_{1+}k_{r\xi})\|\\
=&
\bigg(\int^{2\pi}_{0}|\bar{\phi}_{1+}(e^{i\theta})
-\bar{\phi}_{1+}(r\xi)|^{2}|k_{r\xi}(e^{i\theta})|^{2}\frac{d\theta}{2\pi}\bigg)^{\frac{1}{2}}
\rightarrow 0, a.e.(r\rightarrow 1^{-}).
\end{align*}
By induction hypothesis, the result holds.

(2)Using the properties of $V,$ we have
\begin{align*}
\langle R_{H_1}\cdots R_{H_m}\bar{z}\bar{k}_{r\xi},\bar{z}\bar{k}_{r\xi}\rangle
=&\langle R_{H_1}\cdots R_{H_m}Vk_{r\xi},Vk_{r\xi}\rangle\\
=&\langle VVk_{r\xi},VR_{H_1}\cdots R_{H_m}Vk_{r\xi}\rangle\\
=&\langle k_{r\xi},VR_{H_1}\cdots R_{H_m}Vk_{r\xi}\rangle.\\
=&\overline{\langle (VR_{H_1}V)\cdots (VR_{H_m}V)k_{r\xi},k_{r\xi}\rangle.}
\end{align*}
By \eqref{VRV}, we have
\begin{align*}
VR_{H_i}V=
R_{\begin{pmatrix}
\begin{smallmatrix}
 \bar{\psi_{i}} & \bar{g_{i}} \\
\bar{\phi_{i}} & \bar{f_{i}}
\end{smallmatrix}
\end{pmatrix}}
\quad  1\leq i \leq m.
\end{align*}
Hence Lemma \ref{keylem} (1) implies the result.

(3)
For $k=2$, by the definition \ref{semi},we have
\begin{align*}
R_{\begin{pmatrix}
\begin{smallmatrix}
 f_{1} & \phi_{1} \\
g_{1} & \psi_{1}
\end{smallmatrix}
\end{pmatrix}}
R_{\begin{pmatrix}
\begin{smallmatrix}
 f_{2} & \phi_{2} \\
g_{2} & \psi_{2}
\end{smallmatrix}
\end{pmatrix}}-
R_{\begin{pmatrix}\begin{smallmatrix}
 f_{1}f_{2} & \phi \\
g & \psi_{1}\psi_{2}
\end{smallmatrix}
\end{pmatrix}}\in \mathfrak{SR}_{L^{\infty}}.
\end{align*}
Assume the result true up to $n-1.$ Observe that
\begin{align*}
&\prod_{i=1}^{n}R_{\begin{pmatrix}
\begin{smallmatrix}
 f_{i} & \phi_{i} \\
g_{i} & \psi_{i}
\end{smallmatrix}
\end{pmatrix}}-R_{\begin{pmatrix}\begin{smallmatrix}
 \Pi_{i=1}^{n}f_{i} & \phi \\
g & \Pi_{i=1}^{n}\psi_{i}
\end{smallmatrix}
\end{pmatrix}}\\
=&\prod_{i=1}^{n}R_{\begin{pmatrix}
\begin{smallmatrix}
 f_{i} & \phi_{i} \\
g_{i} & \psi_{i}
\end{smallmatrix}
\end{pmatrix}}
-R_{\begin{pmatrix}
\begin{smallmatrix}
 f_{1} & \phi_{1} \\
g_{1} & \psi_{1}
\end{smallmatrix}
\end{pmatrix}}
R_{\begin{pmatrix}\begin{smallmatrix}
 \Pi_{i=2}^{n}f_{i} & \phi \\
g & \Pi_{i=2}^{n}\psi_{i}
\end{smallmatrix}
\end{pmatrix}}\\
&\quad\quad\quad\quad\quad+R_{\begin{pmatrix}
\begin{smallmatrix}
 f_{1} & \phi_{1} \\
g_{1} & \psi_{1}
\end{smallmatrix}
\end{pmatrix}}
R_{\begin{pmatrix}\begin{smallmatrix}
 \Pi_{i=1}^{n}f_{i} & \phi \\
g & \Pi_{i=1}^{n}\psi_{i}
\end{smallmatrix}
\end{pmatrix}}
-R_{\begin{pmatrix}\begin{smallmatrix}
 \Pi_{i=1}^{n}f_{i} & \phi \\
g & \Pi_{i=1}^{n}\psi_{i}
\end{smallmatrix}
\end{pmatrix}}\\
=&R_{\begin{pmatrix}
\begin{smallmatrix}
 f_{1} & \phi_{1} \\
g_{1} & \psi_{1}
\end{smallmatrix}
\end{pmatrix}}\bigg(\underbrace{\prod_{i=2}^{n}R_{\begin{pmatrix}
\begin{smallmatrix}
 f_{i} & \phi_{i} \\
g_{i} & \psi_{i}
\end{smallmatrix}
\end{pmatrix}}-R_{\begin{pmatrix}\begin{smallmatrix}
 \Pi_{i=2}^{n}f_{i} & \phi \\
g & \Pi_{i=2}^{n}\psi_{i}
\end{smallmatrix}
\end{pmatrix}}}_{\in\mathfrak{SR}_{L^{\infty}}}\bigg)\\
&\quad\quad\quad\quad\quad+\underbrace{R_{\begin{pmatrix}
\begin{smallmatrix}
 f_{1} & \phi_{1} \\
g_{1} & \psi_{1}
\end{smallmatrix}
\end{pmatrix}}
R_{\begin{pmatrix}\begin{smallmatrix}
 \Pi_{i=2}^{n}f_{i} & \phi \\
g & \Pi_{i=2}^{n}\psi_{i}
\end{smallmatrix}
\end{pmatrix}}
-R_{\begin{pmatrix}\begin{smallmatrix}
 \Pi_{i=1}^{n}f_{i} & \phi \\
g & \Pi_{i=1}^{n}\psi_{i}
\end{smallmatrix}
\end{pmatrix}}}_{\in\mathfrak{SR}_{L^{\infty}}}.
\end{align*}
By induction hypothesis,the result holds.
	
(4)Suppose $g,\phi \in L^{\infty}.$
Linear combinations of operators of the form
\begin{align*}
&R_{H_1}R_{H_2}\cdots R_{H_n-1}\bigg(R_{H_n}R_{H_n+1}
-R_{\begin{pmatrix}\begin{smallmatrix}
 f_{n}f_{n+1} & \phi \\
g & \psi_{n}\psi_{n+1}
\end{smallmatrix}
\end{pmatrix}}\bigg)
R_{H_{n+2}}R_{H_{n+3}}\cdots R_{H_{n+k}}\\
=&R_{H_1}R_{H_2}\cdots R_{H_{n-1}}R_{H_n}R_{H_{n+1}}R_{H_{n+2}}R_{H_{n+3}}\cdots R_{H_{n+k}}\\
&-R_{H_1}R_{H_2}\cdots R_{H_{n-1}}
R_{\begin{pmatrix}\begin{smallmatrix}
 f_{n}f_{n+1} & \phi \\
g & \psi_{n}\psi_{n+1}
\end{smallmatrix}
\end{pmatrix}}
R_{H_{n+2}}R_{H_{n+3}}\cdots R_{H_{n+k}},
\end{align*}
form a dense subset of $\mathfrak{SR}_{L^{\infty}}.$
Lemma \ref{keylem} (1)(2) gives the result.
	
(5) If $R$ is a bounded operator on $L^{2}$ and $\lim_{n\rightarrow \infty}||R_{H_n}-R||=0,$ then
\begin{align*}
&\lim_{n\rightarrow\infty}||P_{+}(R_{H_n}-R)P_{+}||\leq\lim_{n\rightarrow \infty}||R_{H_n}-R||=0,\\
&\lim_{n\rightarrow\infty}||P_{-}(R_{H_n}-R)P_{+}||\leq\lim_{n\rightarrow \infty}||R_{H_n}-R||=0,\\
&\lim_{n\rightarrow\infty}||P_{+}(R_{H_n}-R)P_{-}||\leq\lim_{n\rightarrow \infty}||R_{H_n}-R||=0,\\
&\lim_{n\rightarrow\infty}||P_{-}(R_{H_n}-R)P_{-}||\leq\lim_{n\rightarrow \infty}||R_{H_n}-R||=0.
\end{align*}
Since
\begin{align*}
P_{+}R_{H_n}P_{+}|_{H^{2}}&=T_{f_{n}},\\
P_{-}R_{H_n}P_{+}|_{H^{2}}&=H_{g_{n}},\\
P_{+}R_{H_n}P_{-}|_{\bar{z}\overline{H^{2}}}&=H^{*}_{\bar{\varphi}_{n}},\\
P_{-}R_{H_n}P_{-}|_{\bar{z}\overline{H^{2}}}&=\tilde{T}_{\psi_{n}},
\end{align*}
and
\begin{align*}
&\|T_{\bar{z}}P_{+}RP_{+}T_{z}-P_{+}RP_{+}\|\\
=&\|T_{\bar{z}}P_{+}RP_{+}T_{z}-T_{\bar{z}}T_{f_{n}}T_{z}
+T_{f_{n}}-P_{+}RP_{+}\|\\
\leq &\|T_{\bar{z}}P_{+}RP_{+}T_{z}-T_{\bar{z}}T_{f_{n}}T_{z}\|
+\|T_{f_{n}}-P_{+}RP_{+}\|\\
\leq &\|T_{\bar{z}}(P_{+}RP_{+}-T_{f_{n}})T_{z}\|
+\|T_{f_{n}}-P_{+}RP_{+}\|\\
\leq &\|T_{\bar{z}}\|\|P_{+}RP_{+}-T_{f_{n}}\|\|T_{z}\|
+\|T_{f_{n}}-P_{+}RP_{+}\|\rightarrow 0\quad (n\rightarrow \infty).
\end{align*}
it follows that $T_{\bar{z}}P_{+}RP_{+}T_{z}=P_{+}RP_{+}.$
We have $P_{+}RP_{+}|_{H^{2}}$ is a Toeplitz operator, because  an operator $T$ is a Toeplitz operator if and only if
$T_{\bar{z}}TT_{z}=T$ \cite[Theorem 6]{Brown1963/64}.
Moreover,
\begin{align*}
&\|P_{-}RP_{+}T_{z}-S_{z}P_{+}RP_{+}\|\\
= &\|P_{-}RP_{+}T_{z}-P_{-}R_{H_n}P_{+}T_{z}
+S_{z}P_{-}R_{H_n}P_{+}-S_{z}P_{+}RP_{+}\|\\
\leq &\|P_{-}RP_{+}T_{z}-P_{-}R_{H_n}P_{+}T_{z}\|
+\|S_{z}P_{-}R_{H_n}P_{+}-S_{z}P_{+}RP_{+}\|\\
\leq &\|P_{-}RP_{+}-P_{-}R_{H_n}P_{+}\|\|T_{z}\|
+\|S_{z}\|\|P_{-}R_{n}P_{+}-P_{+}RP_{+}\|
\end{align*}
shows that $P_{-}RP_{+}T_{z}=S_{z}P_{+}RP_{+}.$
Since an operator $H$ is a Hankel operator if and only if
$HT_{z}=\tilde{T}_{z}H$ \cite[Theorem 1.8]{Peller2003},
we have $P_{-}RP_{+}|_{H^{2}}$ is a Hankel operator.
Similary, $P_{+}RP_{-}|_{\bar{z}\overline{H^{2}}}$ is the adjoint of  a Hankel operator.
By $VT_{\psi}V=\tilde{T}_{\bar{\psi}}$, then $P_{-}R_{H_n}P_{-}|_{\bar{z}\overline{H^{2}}}$ is a dual Toeplitz operator. Hence $R$ is a GSIO.
\end{proof}

\begin{theorem}\label{main}
The sequence
\begin{align*}
0 \longrightarrow \mathfrak{SR}_{L^{\infty}} \longrightarrow \mathfrak{R}_{L^{\infty}}
\longrightarrow  L_{2}^{\infty}(\mathbb{T})\longrightarrow 0
\end{align*}
is a short exact sequence; that is, the quotient algebra
$\mathfrak{R}_{L^{\infty}}/\mathfrak{SR}_{L^{\infty}}$
is *-isometrically isomorphic to $L^{\infty} \oplus L^{\infty}.$
\end{theorem}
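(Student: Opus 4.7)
\smallskip

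\noindent\textbf{Proof proposal.}
The plan is to construct the symbol map as a *-homomorphism $\Phi\colon \mathfrak{R}_{L^{\infty}}\to L^{\infty}\oplus L^{\infty}$ using the reproducing-kernel formulas of Lemma \ref{keylem}, and then identify $\mathfrak{SR}_{L^{\infty}}$ as its kernel by showing the induced map on the quotient is an isometry. First, on the dense *-subalgebra $\mathcal A_0$ of finite sums of finite products of $R_H$'s, define
\begin{align*}
\Phi(R)(\xi) = \Bigl(\lim_{r\to 1^-}\langle R k_{r\xi},k_{r\xi}\rangle,\ \lim_{r\to 1^-}\langle R\,\bar z\bar k_{r\xi},\bar z\bar k_{r\xi}\rangle\Bigr).
\end{align*}
By Lemma \ref{keylem}(1)--(2) these limits exist a.e.\ and depend only on $R$ (not on its representation as a sum of products). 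The bound $|\langle R x,x\rangle|\le\|R\|$ for unit vectors gives $\|\Phi(R)\|_{\infty}\le\|R\|$, so $\Phi$ extends uniquely to a bounded linear map on $\mathfrak{R}_{L^{\infty}}$. Multiplicativity on products of $R_{H_i}$'s follows from Lemma \ref{keylem}(1)--(2) (the radial limit of the $m$-fold inner product equals the product of the $f_i$'s, and symmetrically $\psi_i$'s), and the identity $R_H^{*}=R_{H^{\#}}$ with $H^{\#}$ obtained by conjugating and transposing entries immediately gives $\Phi(R^{*})=\overline{\Phi(R)}$; thus $\Phi$ is a *-homomorphism.

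Next, $\mathfrak{SR}_{L^{\infty}}\subset\ker\Phi$ is exactly Lemma \ref{keylem}(4). Surjectivity is witnessed by the ``diagonal'' GSIOs: for any $(f,\psi)\in L^{\infty}\oplus L^{\infty}$, set $H_d=\bigl(\begin{smallmatrix} f&0\\ 0&\psi\end{smallmatrix}\bigr)$, so that $R_{H_d}=T_f\oplus \tilde T_\psi$ under the decomposition $L^{2}=H^{2}\oplus\bar z\overline{H^{2}}$, and the case $m=1$ of Lemma \ref{keylem}(1)--(2) yields $\Phi(R_{H_d})=(f,\psi)$. Passing to the quotient, the induced *-homomorphism $\bar\Phi\colon \mathfrak{R}_{L^{\infty}}/\mathfrak{SR}_{L^{\infty}}\to L^{\infty}\oplus L^{\infty}$ is automatically contractive: $\|\bar\Phi([R])\|_{\infty}\le\|R+S\|$ for every $S\in\mathfrak{SR}_{L^{\infty}}$, hence $\le\|[R]\|$.

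The main obstacle is showing $\bar\Phi$ is an isometry, since this is what forces $\ker\Phi=\mathfrak{SR}_{L^{\infty}}$. The key is that cosets of diagonal GSIOs are dense in the quotient and their norms are easy to compute. By Lemma \ref{keylem}(3) with the choice $g=\phi=0$, every product $\prod_i R_{H_i}$ coincides modulo $\mathfrak{SR}_{L^{\infty}}$ with the diagonal GSIO $R_{H_d}$, $H_d=\bigl(\begin{smallmatrix}\prod f_i&0\\0&\prod\psi_i\end{smallmatrix}\bigr)$; moreover any purely anti-diagonal GSIO $R_{\bigl(\begin{smallmatrix}0&\phi\\g&0\end{smallmatrix}\bigr)}$ belongs to $\mathfrak{SR}_{L^{\infty}}$ (apply the defining relation \eqref{semi} with $H_1=H_2=0$). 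Thus the image of $\mathcal A_0$ in the quotient consists of cosets $[R_{H_d}]$, which is dense. For such a coset,
\begin{align*}
\|R_{H_d}\|=\max(\|T_f\|,\|\tilde T_\psi\|)=\max(\|f\|_{\infty},\|\psi\|_{\infty})=\|\bar\Phi([R_{H_d}])\|_{\infty},
\end{align*}
and combined with $\|[R_{H_d}]\|\le\|R_{H_d}\|$ and $\|\bar\Phi([R_{H_d}])\|_{\infty}\le\|[R_{H_d}]\|$ this pins down $\|[R_{H_d}]\|=\|\bar\Phi([R_{H_d}])\|_{\infty}$. A standard density argument (both sides are continuous in $[R]$) promotes isometry from the dense set of diagonal cosets to the whole quotient. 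Being an isometric *-homomorphism with dense range, $\bar\Phi$ is a *-isometric isomorphism onto $L^{\infty}\oplus L^{\infty}$; this simultaneously yields $\ker\Phi=\mathfrak{SR}_{L^{\infty}}$ and exactness of the sequence.
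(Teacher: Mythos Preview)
Your proof is correct and follows essentially the same approach as the paper: both construct the symbol map $\Phi$ via the radial Berezin-type limits from Lemma~\ref{keylem}(1)--(2), verify multiplicativity from those same formulas, use Lemma~\ref{keylem}(4) for the inclusion $\mathfrak{SR}_{L^{\infty}}\subset\ker\Phi$, and reduce everything to diagonal GSIOs via Lemma~\ref{keylem}(3). The only organizational difference is in how the reverse inclusion $\ker\Phi\subset\mathfrak{SR}_{L^{\infty}}$ is obtained: the paper runs a Cauchy-sequence argument to show that \emph{every} $T\in\mathfrak{R}_{L^{\infty}}$ decomposes as $R_{\bigl(\begin{smallmatrix}f&0\\0&\psi\end{smallmatrix}\bigr)}+E$ with $E\in\mathfrak{SR}_{L^{\infty}}$, and then reads off the kernel directly; you instead prove isometry of $\bar\Phi$ on the dense set of diagonal cosets and extend by continuity, which yields injectivity and hence the kernel identification. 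Both routes are equivalent and equally short.
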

\begin{proof}
Linear combinations of operators of the form $\prod^{m}_{j=1}R_{\begin{pmatrix}
\begin{smallmatrix}
 f_{i} & \phi_{i} \\
g_{i} & \psi_{i}
\end{smallmatrix}
\end{pmatrix}}$
span a dense subset of $\mathfrak{R}_{L^{\infty}},$ compute
\begin{align*}
\prod^{m}_{j=1}R_{\begin{pmatrix}
\begin{smallmatrix}
 f_{i} & \phi_{i} \\
g_{i} & \psi_{i}
\end{smallmatrix}
\end{pmatrix}}
=R_{\begin{pmatrix}\begin{smallmatrix}
 \Pi_{i=1}^{m}f_{i} & 0 \\
0 & \Pi_{i=1}^{m}\psi_{i}
\end{smallmatrix}
\end{pmatrix}}+
\underbrace{\prod^{m}_{j=1}R_{\begin{pmatrix}
\begin{smallmatrix}
 f_{i} & \phi_{i} \\
g_{i} & \psi_{i}
\end{smallmatrix}
\end{pmatrix}}
-R_{\begin{pmatrix}\begin{smallmatrix}
 \Pi_{i=1}^{m}f_{i} & 0 \\
0 & \Pi_{i=1}^{m}\psi_{i}
\end{smallmatrix}
\end{pmatrix}}.}_{\in \mathfrak{SR}_{L^{\infty}}(By Lemma \ref{keylem}(3))}
\end{align*}
This shows that operators of the form
\[T=R_{\begin{pmatrix}
\begin{smallmatrix}
 f & 0 \\
0 & \psi
\end{smallmatrix}
\end{pmatrix}}+E_{0},\quad f,\psi\in L^{\infty}, E_{0}\in\mathfrak{SR}_{L^{\infty}}.\]
form a dense subset of $\mathfrak{R}_{L^{\infty}}.$
Therefore, for every operator $T$ in $\mathfrak{R}_{L^{\infty}},$
there exists a sequence of operators
\[T_{n}=R_{\begin{pmatrix}
\begin{smallmatrix}
 f_{n} & 0 \\
0 & \psi_{n}
\end{smallmatrix}
\end{pmatrix}}+E_{n},\quad E_{n}\in \mathfrak{SR}_{L^{\infty}}\] such that
$\lim_{n\rightarrow\infty}||T_{n}-T||=0.$
By Lemma \ref{keylem}(1)and(4), we have
\begin{align*}
f_{n}(\xi)=&\lim_{r\rightarrow 1^{-}}\langle T_{n}k_{r\xi},k_{r\xi}\rangle.
\end{align*}
and
\begin{align}\label{inf}
|f_{n}(\xi)-f_{m}(\xi)|\leq \|T_{n}-T_{m}\|.
\end{align}
So $\{f_{n}(\xi)\}$ is  a Cauchy sequence.
Define
\begin{align*}
f(\xi)\triangleq\lim_{n\rightarrow \infty}f_{n}(\xi).
\end{align*}
we then have
\begin{align*}
&\mid\lim_{r\rightarrow 1^{-}}\langle Tk_{r\xi},k_{r\xi}\rangle-f(\xi)\mid\\
=&|\lim_{r\rightarrow 1^{-}}\langle Tk_{r\xi},k_{r\xi}\rangle-\lim_{r\rightarrow 1^{-}}\langle T_{n}k_{r\xi},k_{r\xi}\rangle+\lim_{r\rightarrow 1^{-}}\langle T_{n}k_{r\xi},k_{r\xi}\rangle-f_{n}(\xi)+f_{n}(\xi)-f(\xi)|\\
\leq &\mid\lim_{r\rightarrow 1^{-}}\langle Tk_{r\xi},k_{r\xi}\rangle-\lim_{r\rightarrow 1^{-}}\langle T_{n}k_{r\xi},k_{r\xi}\rangle\mid+\mid f_{n}(\xi)-f(\xi) \mid\\
\leq &\|T-T_{n}\|+\mid f_{n}(\xi)-f(\xi) \mid
\end{align*}
and it follows that
\begin{align*}
\lim_{r\rightarrow 1^{-}}\langle Tk_{r\xi},k_{r\xi}\rangle=f(\xi).
\end{align*}
Similarly, define
\begin{align*}
\psi(\xi)\triangleq\lim_{n\rightarrow \infty}\psi_{n}(\xi),
\end{align*}
we have
\begin{align*}
\lim_{r\rightarrow 1^{-}}\langle T\bar{z}\bar{k}_{r\xi},\bar{z}\bar{k}_{r\xi}\rangle=\psi(\xi).
\end{align*}
Using\eqref{inf}, $\lim_{n\rightarrow\infty}\|f_{n}-f\|_{\infty}=0.$ Similarly,
$\lim_{n\rightarrow\infty}\|\psi_{n}-\psi\|_{\infty}=0.$
Thus
\begin{align*}
\|R_{\begin{pmatrix}
\begin{smallmatrix}
 f_{n}-f & 0 \\
0 & \psi_{n}-\psi
\end{smallmatrix}
\end{pmatrix}}\|
\leq \|f_{n}-f\|+\|\psi_{n}-\psi\|\rightarrow 0 \quad (n\rightarrow\infty).
\end{align*}
Let $E=T-R_{\begin{pmatrix}
\begin{smallmatrix}
 f & 0 \\
0 & \psi
\end{smallmatrix}
\end{pmatrix}},$
we have $\lim_{n\rightarrow\infty}\|E_{n}-E\|=0,$
since $\mathfrak{SR}_{L^{\infty}}$ is closed, $E\in\mathfrak{SR}_{L^{\infty}}.$
It follows that
$T$ have the following form
\begin{align*}
T=R_{\begin{pmatrix}
\begin{smallmatrix}
 f & 0 \\
0 & \psi
\end{smallmatrix}
\end{pmatrix}}+E,\quad f,\psi\in L^{\infty}(\mathbb{T}), E\in\mathfrak{SR}_{L^{\infty}}.
\end{align*}

Define the map $\rho:\mathfrak{R}_{L^{\infty}}\rightarrow L_{2}^{\infty}(\mathbb{T})$ by
\begin{align}\label{sym}
\rho(T)(\xi)=\left(\lim_{r\rightarrow 1^{-}}\langle Tk_{r\xi},k_{r\xi}\rangle,\lim_{r\rightarrow 1^{-}}\langle T\bar{z}\bar{k}_{r\xi},\bar{z}\bar{k}_{r\xi}\rangle\right).
\end{align}
Recall the norm of $L_{2}^{\infty}(\mathbb{T}),$
$\|(a,b)\|=max\{\|a\|_{\infty},\|b\|_{\infty}\}.$
Clearly, $\|\rho(T)\|\leq\|T\|.$
The map $\rho$ is linear,contractive, and preserves conjugation.
Moreover, \[\rho(T)=(f,\psi).\]
If $A_{1},A_{2}\in \mathfrak{R}_{L^{\infty}},$
and
\begin{align*}
A_{1}=R_{\begin{pmatrix}
\begin{smallmatrix}
 f_1 & 0 \\
0 & \psi_1
\end{smallmatrix}
\end{pmatrix}}+E_{1},\quad
A_{2}=&R_{\begin{pmatrix}
\begin{smallmatrix}
 f_2 & 0 \\
0 & \psi_2
\end{smallmatrix}
\end{pmatrix}}+E_{2},\quad
E_{1},E_{2}\in\mathfrak{SR}_{L^{\infty}},
\end{align*}
then
\begin{align*}
A_{1}A_{2}=R_{\begin{pmatrix}
\begin{smallmatrix}
 f_1 & 0 \\
0 & \psi_1
\end{smallmatrix}
\end{pmatrix}}
R_{\begin{pmatrix}
\begin{smallmatrix}
 f_2 & 0 \\
0 & \psi_2
\end{smallmatrix}
\end{pmatrix}}
+\underbrace{R_{\begin{pmatrix}
\begin{smallmatrix}
 f_1 & 0 \\
0 & \psi_1
\end{smallmatrix}
\end{pmatrix}}E_{2}
+E_{1}R_{\begin{pmatrix}
\begin{smallmatrix}
 f_2 & 0 \\
0 & \psi_2
\end{smallmatrix}
\end{pmatrix}}
+E_{1}E_{2}}_{\in \mathfrak{SR}_{L^{\infty}}}.
\end{align*}
Using Lemma \ref{keylem}(1) and (4), we have
\begin{align*}
\lim_{r\rightarrow 1^{-}}\langle A_{1}A_{2}k_{r\xi},k_{r\xi}\rangle
=&\lim_{r\rightarrow 1^{-}}\langle R_{\begin{pmatrix}
\begin{smallmatrix}
 f_1 & 0 \\
0 & \psi_1
\end{smallmatrix}
\end{pmatrix}}
R_{\begin{pmatrix}
\begin{smallmatrix}
 f_2 & 0 \\
0 & \psi_2
\end{smallmatrix}
\end{pmatrix}}k_{r\xi},k_{r\xi}\rangle\\
=&\lim_{r\rightarrow 1^{-}}\langle R_{\begin{pmatrix}
\begin{smallmatrix}
 f_{1}f_2 & 0 \\
0 & \psi_{1}\psi_2
\end{smallmatrix}
\end{pmatrix}}k_{r\xi},k_{r\xi}\rangle\\
=&f_{1}(\xi)\cdot f_{2}(\xi)\\
=&\lim_{r\rightarrow 1^{-}}\langle R_{\begin{pmatrix}
\begin{smallmatrix}
 f_1 & 0 \\
0 & \psi_1
\end{smallmatrix}
\end{pmatrix}}k_{r\xi},k_{r\xi}\rangle\cdot
\lim_{r\rightarrow 1^{-}}\langle R_{\begin{pmatrix}
\begin{smallmatrix}
 f_2 & 0 \\
0 & \psi_2
\end{smallmatrix}
\end{pmatrix}}k_{r\xi},k_{r\xi}\rangle\\
=&\lim_{r\rightarrow 1^{-}}\langle A_{1}k_{r\xi},k_{r\xi}\rangle\cdot
\lim_{r\rightarrow 1^{-}}\langle A_{2}k_{r\xi},k_{r\xi}\rangle \quad a.e. ~on ~\mathbb{T}.
\end{align*}
Similarly,
\begin{align*}
\lim_{r\rightarrow 1^{-}}\langle A_{1}A_{2}\bar{z}\bar{k}_{r\xi},\bar{z}\bar{k}_{r\xi}\rangle
=&\lim_{r\rightarrow 1^{-}}\langle A_{1}\bar{z}\bar{k}_{r\xi},\bar{z}\bar{k}_{r\xi}\rangle\cdot
\lim_{r\rightarrow 1^{-}}\langle A_{2}\bar{z}\bar{k}_{r\xi},\bar{z}\bar{k}_{r\xi}\rangle \quad a.e. ~on ~\mathbb{T}.
\end{align*}
Since the algebraic operations of $L_{2}^{\infty}(\mathbb{T})$ are all performed coordinated-wise,
we have $\rho$ is multiplicative.

By Lemma \ref{keylem}(4), we have $\mathfrak{SR}_{L^{\infty}}\subseteq\ker\rho.$
For every $T=R(f,0,0,\psi)+E\in \ker\rho,$
thus, $f=\psi=0.$
Hence $\mathfrak{SR}_{L^{\infty}}=\ker\rho.$

We define the map
\begin{align*}
\widetilde{\rho} : \mathfrak{R}_{L^{\infty}}/\mathfrak{SR}_{L^{\infty}}&\longrightarrow L_{2}^{\infty}(\mathbb{T}),\\
R_{\begin{pmatrix}
\begin{smallmatrix}
 f & 0 \\
0 & \psi
\end{smallmatrix}
\end{pmatrix}}+\mathfrak{SR}_{L^{\infty}} &\longmapsto (f,\psi).
\end{align*}
Hence, $\widetilde{\rho}$ is a $C^{*}$-isomorphism.
\end{proof}
\begin{corollary}\label{sem}
If $T\in\mathfrak{R}_{L^{\infty}},$ then $\rho(T^{*}T-TT^{*})=(0,0).$
\end{corollary}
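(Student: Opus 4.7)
The plan is to leverage the structure already extracted in Theorem~\ref{main}: the map $\rho$ is a $*$-homomorphism from $\mathfrak{R}_{L^{\infty}}$ into the target $L_2^{\infty}(\mathbb{T}) \cong L^{\infty}\oplus L^{\infty}$, which is a \emph{commutative} $C^{*}$-algebra (multiplication is performed coordinate-wise, as noted just before the definition of $\rho$). Once these two facts are invoked, the corollary follows with essentially no calculation.

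Concretely, first I would write
\begin{align*}
\rho(T^{*}T) &= \rho(T^{*})\,\rho(T) = \overline{\rho(T)}\cdot \rho(T),\\
\rho(TT^{*}) &= \rho(T)\,\rho(T^{*}) = \rho(T)\cdot \overline{\rho(T)},
\end{align*}
using that $\rho$ is multiplicative and preserves conjugation (both properties are established in the proof of Theorem~\ref{main}). Then, writing $\rho(T)=(f,\psi)$ with $f,\psi\in L^{\infty}(\mathbb{T})$, coordinate-wise multiplication in $L^{\infty}_{2}(\mathbb{T})$ gives
\[
\overline{(f,\psi)}\cdot(f,\psi)=(\bar f f,\bar\psi\psi)=(f\bar f,\psi\bar\psi)=(f,\psi)\cdot\overline{(f,\psi)},
\]
since scalar multiplication in $L^{\infty}(\mathbb{T})$ is commutative. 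Therefore $\rho(T^{*}T)=\rho(TT^{*})$, and by linearity $\rho(T^{*}T-TT^{*})=(0,0)$.

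There is no real obstacle here: the whole content is hidden in Theorem~\ref{main}. The only point to be slightly careful about is to justify that the codomain is commutative and that $\rho$ really is a $*$-homomorphism (not merely linear and contractive); both were verified in the proof of Theorem~\ref{main} through the radial-limit computations combined with Lemma~\ref{keylem}(1),(2),(4). One could equivalently phrase the corollary as the statement that every element of $\mathfrak{R}_{L^{\infty}}$ is ``essentially normal modulo $\mathfrak{SR}_{L^{\infty}}$,'' i.e.\ $T^{*}T-TT^{*}\in\mathfrak{SR}_{L^{\infty}}=\ker\rho$, which is how I would record the result in the final write-up.
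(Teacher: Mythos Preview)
Your proposal is correct and is exactly the intended argument: the paper states Corollary~\ref{sem} immediately after Theorem~\ref{main} with no separate proof, since it is an instant consequence of $\rho$ being a $*$-homomorphism into the commutative algebra $L^{\infty}_{2}(\mathbb{T})$. Your write-up simply spells out that one-line deduction.
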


\begin{example}
In fact, $\mathfrak{R}_{L^{\infty}}$ is a proper subalgebra of
$B(L^{2}(\mathbb{T})).$ We make some modification to \cite[Example 4]{Englis1995}.
Let $T$ be the operator defined by
\begin{align*}
Tz^{n}=&z^{2n+1},\quad n\in \mathbb{Z}.
\end{align*}
Note that
\begin{align}\label{map}
T^{*}z^{n}=&\rm \left\{ \begin{array}{ll}
   \  z^{\frac{n-1}{2}},&\text{if}~n~\text{is odd}; \\
     \ 0,&\text{if}~n~\text{is even}.
     \end{array} \right.
\end{align}
and
\begin{align*}
(T^{*}T-TT^{*})z^{n}=\rm \left\{ \begin{array}{ll}
   \  0,&\text{if}~n~\text{is odd}; \\
     \ z^{n},&\text{if}~n~\text{is even}.
     \end{array} \right.
\end{align*}
Hence $T^{*}T-TT^{*}$ is the orthogonal projeciton onto span$\{z^{2n}\}_{n\in \mathbb{Z}}.$
\begin{align*}
\langle(T^{*}T-TT^{*})k_{r\xi},k_{r\xi}\rangle
=&(1-r^{2})\langle(T^{*}T-TT^{*})\sum_{i=0}^{\infty}(r\bar{\xi})^{i}z^{i},\sum_{j=0}^{\infty}(r\bar{\xi})^{j}z^{j}\rangle\\
=&(1-r^{2})\langle\sum_{n=0}^{\infty}(r\bar{\xi})^{2n}z^{2n},\sum_{m=0}^{\infty}(r\bar{\xi})^{2m}z^{2m}\rangle\\
=&\langle k_{(r\bar{\xi})^{2}},k_{(r\bar{\xi})^{2}} \rangle\\
=&\frac{1-r^{2}}{1-r^{4}}
=\frac{1}{1+r^{2}}\rightarrow \frac{1}{2}(r\rightarrow 1^{-}).
\end{align*}
By Corollary \ref{sem}, we have $T\notin \mathfrak{R}_{L^{\infty}}.$
\end{example}

\section{$C^{*}-$algebras $\mathfrak{R}_{C(\mathbb{T})}$ and $\mathfrak{R}_{QC}$}

Let $C(\mathbb{T})$ denote the set of continuous complex-valued functions on $\mathbb{T},$ and $C(\mathbb{T})$ is a closed subalgebra of $L^{\infty}.$
The set of all compact operators on $L^{2}(\mathbb{T})$ is denoted by $\mathcal{K}(L^{2}(\mathbb{T})).$
\begin{lemma}\label{SK}
The $C^{*}-$algebra $\mathfrak{R}_{C(\mathbb{T})}$ is irreducible. Furthermore, $LC(L^{2}(\mathbb{T}))\subset \mathfrak{R}_{C(\mathbb{T})}.$
\end{lemma}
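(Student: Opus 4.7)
The plan is to establish irreducibility first and then invoke the standard principle that an irreducible $C^{*}$-algebra on a Hilbert space containing a single non-zero compact operator must contain all of $\mathcal{K}(L^{2}(\mathbb{T}))$; so the second statement reduces to exhibiting one non-zero compact element of $\mathfrak{R}_{C(\mathbb{T})}$.

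For irreducibility, I would work with the commutant. Note that $\mathfrak{R}_{C(\mathbb{T})}$ contains every multiplication operator $M_{f}$ with $f\in C(\mathbb{T})$ (take $H=\begin{pmatrix}\begin{smallmatrix}f&f\\f&f\end{smallmatrix}\end{pmatrix}$) and also contains $P_{+}$ (take $H=\begin{pmatrix}\begin{smallmatrix}1&0\\0&0\end{smallmatrix}\end{smallmatrix}\end{pmatrix}$). Suppose $A\in B(L^{2}(\mathbb{T}))$ commutes with every element of $\mathfrak{R}_{C(\mathbb{T})}$. Since $A$ commutes with $\{M_{f}:f\in C(\mathbb{T})\}$, which is a maximal abelian von Neumann algebra on $L^{2}(\mathbb{T})$, $A$ must itself be a multiplication operator $M_{\chi}$ for some $\chi\in L^{\infty}(\mathbb{T})$. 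It remains to show that the extra requirement $M_{\chi}P_{+}=P_{+}M_{\chi}$ forces $\chi$ to be constant. Testing the commutation relation against $\mathbf{1}\in H^{2}$ gives $P_{+}\chi=\chi\cdot P_{+}\mathbf{1}=\chi$, so $\chi\in H^{2}$; testing against $\bar z\in \bar z\overline{H^{2}}$ gives $\chi\bar z\in \bar z\overline{H^{2}}$, i.e.\ $\chi\in \overline{H^{2}}$. Hence $\chi\in H^{2}\cap\overline{H^{2}}=\mathbb{C}$, so $A$ is a scalar and $\mathfrak{R}_{C(\mathbb{T})}$ is irreducible.

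For the inclusion $\mathcal{K}(L^{2}(\mathbb{T}))\subset\mathfrak{R}_{C(\mathbb{T})}$, I would exhibit a rank-one operator inside the algebra. Take $H_{0}=\begin{pmatrix}\begin{smallmatrix}0&0\\ \bar z&0\end{smallmatrix}\end{pmatrix}\in C(\mathbb{T})_{2\times 2}$. Then $R_{H_{0}}=P_{-}M_{\bar z}P_{+}$, and a direct computation on an element $x=\sum_{n\geq 0}a_{n}z^{n}\in H^{2}$ gives $P_{-}(\bar z x)=a_{0}\bar z=\langle x,\mathbf{1}\rangle\bar z$, while $R_{H_{0}}$ vanishes on $\bar z\overline{H^{2}}$. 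Hence $R_{H_{0}}$ is the rank-one operator $\bar z\otimes \mathbf{1}$ and in particular is a non-zero compact operator in $\mathfrak{R}_{C(\mathbb{T})}$.

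Finally, applying the standard result that an irreducible $C^{*}$-subalgebra of $B(\mathcal{H})$ which meets $\mathcal{K}(\mathcal{H})$ non-trivially must contain the whole ideal of compact operators (for instance, Arveson, \emph{An Invitation to $C^{*}$-algebras}, Theorem 1.4.2), we conclude $\mathcal{K}(L^{2}(\mathbb{T}))\subset \mathfrak{R}_{C(\mathbb{T})}$. The main subtlety is the irreducibility argument: verifying that commuting with $M_{f}$ for $f\in C(\mathbb{T})$ is enough to force $A$ to be a multiplication operator, and then using the single extra generator $P_{+}$ to kill all non-constant multipliers; the compact-operator step is essentially free once one non-trivial compact element is produced.
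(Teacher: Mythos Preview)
Your proof is correct and follows the same strategy as the paper: show the commutant is trivial (the paper works with a commuting projection, reduces it to a multiplication via the bilateral shift $M_z$, and then uses commutation with $T_z$ rather than $P_+$ to force the multiplier to be constant), then exhibit a rank-one element (the paper uses $P_+ - T_zT_{\bar z}=1\otimes 1$ instead of your $P_-M_{\bar z}P_+=\bar z\otimes 1$) and invoke the standard irreducible-plus-compact principle. One small imprecision to fix: $\{M_f:f\in C(\mathbb{T})\}$ is \emph{not} a von Neumann algebra, let alone maximal abelian; your conclusion that $A=M_\chi$ is nevertheless correct because commuting with $M_z$ alone already forces this (the commutant of the bilateral shift is $\{M_g:g\in L^{\infty}\}$), which is precisely the fact the paper cites.
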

\begin{proof}
If $\mathfrak{R}_{C(\mathbb{T})}$ is reducible, then there exists a nontrivial orthogonal projection $Q$ which
commutes with each element of $\mathfrak{R}_{C(\mathbb{T})}.$
In particular, $QR_{\begin{pmatrix}
\begin{smallmatrix}
 z & z \\
z & z
\end{smallmatrix}
\end{pmatrix}}
=R_{\begin{pmatrix}
\begin{smallmatrix}
 z & z \\
z & z
\end{smallmatrix}
\end{pmatrix}}Q$
and $R_{\begin{pmatrix}
\begin{smallmatrix}
 z & z \\
z & z
\end{smallmatrix}
\end{pmatrix}}$ is the bilateral shift. Since the commutant of the
bilateral shift is the set of all multiplications\cite[146]{Halmos1982}, it follows that $Q=M_{\chi_{\Delta}},$ where $\chi_{\Delta}$ is a characteristic function.
Note that
\begin{align*}
R_{\begin{pmatrix}
\begin{smallmatrix}
 z & 0 \\
0 & 0
\end{smallmatrix}
\end{pmatrix}}Q
=& QR_{\begin{pmatrix}
\begin{smallmatrix}
 z & 0 \\
0 & 0
\end{smallmatrix}
\end{pmatrix}},\\
\begin{pmatrix}T_{z}& 0\\
		0 &   0 \end{pmatrix}
\begin{pmatrix}T_{\chi_{\Delta}}& H^{*}_{\chi_{\Delta}}\\
		H_{\chi_{\Delta}}&  \tilde{T}_{\chi_{\Delta}} \end{pmatrix}
=&\begin{pmatrix}T_{z}& 0\\
		0 &   0 \end{pmatrix}
\begin{pmatrix}T_{\chi_{\Delta}}& H^{*}_{\chi_{\Delta}}\\
		H_{\chi_{\Delta}}&   \tilde{T}_{\chi_{\Delta}} \end{pmatrix},\\
\begin{pmatrix}T_{z}T_{\chi_{\Delta}}& T_{z}H^{*}_{\chi_{\Delta}}\\
		0 &   0 \end{pmatrix}
=&\begin{pmatrix}T_{\chi_{\Delta}}T_{z}& 0\\
		T_{\chi_{\Delta}}H_{\chi_{\Delta}} &   0 \end{pmatrix},
\end{align*}
implies
\begin{align*}
T_{z}T_{\chi_{\Delta}}=T_{\chi_{\Delta}}T_{z}.
\end{align*}
Since the commutant of $T_{z}$ is the set of all analytic Toeplitz operators on $H^{2}$ \cite[147]{Halmos1982},
it follows that
$\chi_{\Delta}$ is $0$ or $1$, and $Q=I$ or $Q=0.$ This contradicts our assumption.
Therefore $\mathfrak{R}_{C(\mathbb{T})}$ is irreducible.

Applying the formula $I-T_{z}T_{\bar{z}}=1\otimes 1$ yields
\begin{align*}
R_{\begin{pmatrix}
\begin{smallmatrix}
 1 & 0 \\
0 & 0
\end{smallmatrix}
\end{pmatrix}}-R_{\begin{pmatrix}
\begin{smallmatrix}
 z & 0 \\
0 & 0
\end{smallmatrix}
\end{pmatrix}}R_{\begin{pmatrix}
\begin{smallmatrix}
 \bar{z} & 0 \\
0 & 0
\end{smallmatrix}
\end{pmatrix}}=1\otimes 1.
\end{align*}
where $1\otimes 1$ is an operator of rank 1,
thus $LC(L^{2}(\mathbb{T}))\cap\mathfrak{R}_{C(\mathbb{T})}\neq \{0\}.$
By \cite[5.39]{Douglas1998}, we have $LC(L^{2}(\mathbb{T}))\subset \mathfrak{R}_{C(\mathbb{T})}.$
\end{proof}

The algebra $QC\triangleq(H^{\infty}+C(\mathbb{T}))\cap(\overline{H^{\infty}+C(\mathbb{T})})$
is a closed subalgebra of $L^{\infty}(\mathbb{T})$ which properly contains $C(\mathbb{T}).$
Let $\mathfrak{SR}_{QC}$(resp. $\mathfrak{SR}_{C(\mathbb{T})}$) be the closed ideal of
$\mathfrak{R}_{QC}$(resp. $\mathfrak{R}_{C(\mathbb{T})}$) generated by operators of
the form
\begin{align}\label{semi}
R_{\begin{pmatrix}
\begin{smallmatrix}
 f_{1} & \phi_{1} \\
g_{1} & \psi_{1}
\end{smallmatrix}
\end{pmatrix}}
R_{\begin{pmatrix}
\begin{smallmatrix}
 f_{2} & \phi_{2} \\
g_{2} & \psi_{2}
\end{smallmatrix}
\end{pmatrix}}-
R_{\begin{pmatrix}\begin{smallmatrix}
 f_{1}f_{2} & \phi \\
g & \psi_{1}\psi_{2}
\end{smallmatrix}
\end{pmatrix}}
\end{align}
where$f_{i},g_{i},\phi_{i},\psi_{i},
g,\phi$ are in $QC$(resp. $C(\mathbb{T})$)$(i=1,2).$
\begin{lemma}\label{K}
$\mathfrak{SR}_{C(\mathbb{T})}=\mathcal{K}(L^{2}(\mathbb{T})),$ and
$\mathfrak{SR}_{QC}=\mathcal{K}(L^{2}(\mathbb{T})).$
\end{lemma}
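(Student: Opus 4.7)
The plan is to prove the two equalities by establishing the double inclusion $\mathcal{K}(L^{2}(\mathbb{T})) \subseteq \mathfrak{SR}_{C(\mathbb{T})} \subseteq \mathfrak{SR}_{QC} \subseteq \mathcal{K}(L^{2}(\mathbb{T}))$. The middle containment is automatic since $C(\mathbb{T}) \subseteq QC$, so the real work is at the two ends.

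For the upper bound $\mathfrak{SR}_{QC} \subseteq \mathcal{K}(L^{2}(\mathbb{T}))$, I would compute the $2\times 2$ operator matrix of a typical generator $R_{H_{1}}R_{H_{2}} - R_{\left(\begin{smallmatrix} f_{1}f_{2} & \phi \\ g & \psi_{1}\psi_{2} \end{smallmatrix}\right)}$ with respect to $L^{2} = H^{2} \oplus \bar{z}\overline{H^{2}}$ using the matrix representation \eqref{m}. The diagonal entries come out to be $T_{f_{1}}T_{f_{2}} - T_{f_{1}f_{2}} + H^{*}_{\bar{\phi}_{1}}H_{g_{2}}$ and $\tilde{T}_{\psi_{1}}\tilde{T}_{\psi_{2}} - \tilde{T}_{\psi_{1}\psi_{2}} + H_{g_{1}}H^{*}_{\bar{\phi}_{2}}$; the off-diagonal entries are sums of products of a Toeplitz (or dual Toeplitz) with a Hankel-type operator, minus $H^{*}_{\bar{\phi}}$ or $H_{g}$. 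The classical identity $T_{f}T_{g} - T_{fg} = -H^{*}_{\bar{f}}H_{g}$ reduces the Toeplitz semicommutator to a product of Hankels, and an analogous identity handles the dual-Toeplitz semicommutator. Since any symbol $h \in QC \subseteq H^{\infty}+C(\mathbb{T})$ satisfies $h \in \overline{H^{\infty}+C(\mathbb{T})}$ as well, Hartman's theorem gives compactness of both $H_{h}$ and $H^{*}_{\bar{h}}$ for $h \in QC$. Therefore every entry of the matrix above is compact, hence the generator itself is compact, and taking closures yields $\mathfrak{SR}_{QC} \subseteq \mathcal{K}(L^{2}(\mathbb{T}))$.

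For the lower bound $\mathcal{K}(L^{2}(\mathbb{T})) \subseteq \mathfrak{SR}_{C(\mathbb{T})}$, I would produce at least one nonzero compact operator inside $\mathfrak{SR}_{C(\mathbb{T})}$. Taking $H_{1}=\left(\begin{smallmatrix} z & 0 \\ 0 & 0 \end{smallmatrix}\right)$, $H_{2}=\left(\begin{smallmatrix} \bar{z} & 0 \\ 0 & 0 \end{smallmatrix}\right)$ and $\phi=g=0$, the defining generator becomes $R_{H_{1}}R_{H_{2}} - R_{\left(\begin{smallmatrix} 1 & 0 \\ 0 & 0 \end{smallmatrix}\right)}$, whose $(1,1)$-entry is $T_{z}T_{\bar{z}} - I = -(1\otimes 1)$ and whose other entries vanish; this is a nonzero rank-one compact operator in $\mathfrak{SR}_{C(\mathbb{T})}$. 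Then I invoke the general principle: $\mathfrak{SR}_{C(\mathbb{T})}$ is a closed two-sided ideal of $\mathfrak{R}_{C(\mathbb{T})}$, and $\mathcal{K}(L^{2}(\mathbb{T})) \subseteq \mathfrak{R}_{C(\mathbb{T})}$ by Lemma \ref{SK}, so $\mathfrak{SR}_{C(\mathbb{T})} \cap \mathcal{K}(L^{2}(\mathbb{T}))$ is a closed two-sided ideal of $\mathcal{K}(L^{2}(\mathbb{T}))$; since the compacts form a simple $C^{*}$-algebra and this intersection is nonzero, it equals $\mathcal{K}(L^{2}(\mathbb{T}))$. Chaining this with the middle inclusion finishes both equalities.

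The main obstacle I anticipate is the bookkeeping for the dual-Toeplitz semicommutator $\tilde{T}_{\psi_{1}}\tilde{T}_{\psi_{2}} - \tilde{T}_{\psi_{1}\psi_{2}}$: one has to verify the analogue of the Brown--Halmos identity on $\bar{z}\overline{H^{2}}$, which is most efficiently done by conjugating via $U$ (or $V$) using the properties listed in Section 2, to transport it back to a standard Toeplitz semicommutator on $H^{2}$. A secondary subtlety is that the \emph{free} choice of $\phi$ and $g$ in the generator \eqref{semi} might make one worry that the ideal is too large; but the $-H^{*}_{\bar\phi}$ and $-H_{g}$ correction terms cancel against the Hankel-type pieces produced by the matrix product, and in any case they are themselves compact when $\phi,g \in QC$, so they cause no trouble for the upper-bound argument.
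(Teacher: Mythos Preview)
Your proposal is correct and follows essentially the same route as the paper: both compute the $2\times 2$ block form of the generator, invoke the identities $T_{ab}-T_aT_b=H^*_{\bar a}H_b$ and $\tilde T_{ab}-\tilde T_a\tilde T_b=H_aH^*_{\bar b}$ together with Hartman's compactness criterion to get the inclusion into $\mathcal{K}$, and then use simplicity of $\mathcal{K}(L^2(\mathbb{T}))$ for the reverse inclusion. Your write-up is in fact slightly more explicit than the paper's in two places---you organize the argument as the single chain $\mathcal{K}\subseteq\mathfrak{SR}_{C(\mathbb{T})}\subseteq\mathfrak{SR}_{QC}\subseteq\mathcal{K}$, and you exhibit the rank-one operator $-(1\otimes1)$ concretely inside $\mathfrak{SR}_{C(\mathbb{T})}$ before invoking simplicity, whereas the paper leaves the nonzeroness of $\mathfrak{SR}_{C(\mathbb{T})}$ implicit (it is visible from the same computation appearing in Lemma~\ref{SK}).
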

\begin{proof}
If $f_{i},g_{i},\phi_{i},\psi_{i},g,\phi\in C(\mathbb{T})$(resp. $QC$)(i=1,2),
an easy computation shows that
\begin{align*}
&R_{\begin{pmatrix}
\begin{smallmatrix}
 f_{1} & \phi_{1} \\
g_{1} & \psi_{1}
\end{smallmatrix}
\end{pmatrix}}
R_{\begin{pmatrix}
\begin{smallmatrix}
 f_{2} & \phi_{2} \\
g_{2} & \psi_{2}
\end{smallmatrix}
\end{pmatrix}}-
R_{\begin{pmatrix}\begin{smallmatrix}
 f_{1}f_{2} & \phi \\
g & \psi_{1}\psi_{2}
\end{smallmatrix}
\end{pmatrix}}\\
=&\begin{pmatrix}T_{f_{1}}T_{f_{2}}+H^{*}_{\bar{\phi_{1}}}H_{g_{2}}-T_{f_{1}f_{2}}& T_{f_{1}}H^{*}_{\bar{\phi_{2}}}+H^{*}_{\bar{\phi_{1}}}\tilde{T}_{\psi_{2}}-H^{*}_{\bar{\phi}}\\
   H_{g_{1}}T_{f_{2}}+\tilde{T}_{\psi_{1}}H_{g_{2}}-H_{g} &  H_{g_{1}}H^{*}_{\bar{\phi_{2}}}+\tilde{T}_{\psi_{1}}\tilde{T}_{\psi_{2}}-\tilde{T}_{\psi_{1}\psi_{2}}
\end{pmatrix}\quad\\
=&\begin{pmatrix}H^{*}_{\bar{\phi_{1}}}H_{g_{2}}-H^{*}_{\bar{f_{1}}}H_{f_{2}}& T_{f_{1}}H^{*}_{\bar{\phi_{2}}}+H^{*}_{\bar{\phi_{1}}}\tilde{T}_{\psi_{2}}-H^{*}_{\bar{\phi}}\\
   H_{g_{1}}T_{f_{2}}+\tilde{T}_{\psi_{1}}H_{g_{2}}-H_{g} &  H_{g_{1}}H^{*}_{\bar{\phi_{2}}}-H_{\psi_{1}}H^{*}_{\bar{\psi_{2}}}
\end{pmatrix}.\quad
\end{align*}

The second equality follows form the formulas $T_{ab}-T_{a}T_{b}=H^{*}_{\bar{a}}H_{b}$
and $\tilde{T}_{ab}-\tilde{T}_{a}\tilde{T}_{b}=H_{a}H^{*}_{\bar{b}}.$
Since the Hankel operator $H_{\varphi}$ is compact if and only if $\varphi\in H^{\infty}+C(\mathbb{T})$
by\cite[p.27]{Peller2003}, it follows that
\begin{align*}
R_{\begin{pmatrix}
\begin{smallmatrix}
 f_{1} & \phi_{1} \\
g_{1} & \psi_{1}
\end{smallmatrix}
\end{pmatrix}}
R_{\begin{pmatrix}
\begin{smallmatrix}
 f_{2} & \phi_{2} \\
g_{2} & \psi_{2}
\end{smallmatrix}
\end{pmatrix}}-
R_{\begin{pmatrix}\begin{smallmatrix}
 f_{1}f_{2} & \phi \\
g & \psi_{1}\psi_{2}
\end{smallmatrix}
\end{pmatrix}}
\end{align*}
is compact, and
$\mathfrak{SR}_{C(\mathbb{T})}\subset \mathcal{K}(L^{2}(\mathbb{T}))(\text{resp.} \mathfrak{SR}_{QC)}\subset \mathcal{K}(L^{2}(\mathbb{T})).$ On the other hand,
$LC(L^{2}(\mathbb{T}))$ contains no proper closed ideal. Hence, $\mathfrak{SR}_{C(\mathbb{T})}=LC(L^{2}(\mathbb{T}))$ (resp.$ \mathfrak{SR}_{QC)}=\mathcal{K}(L^{2}(\mathbb{T}))$).
\end{proof}
\begin{corollary}
For every $T\in \mathfrak{R}_{L^{\infty}},$ we have
\begin{align*}
\|\rho(T)\|\leq\|T\|_{e}.
\end{align*}
In particular, if $H=\scriptsize{\begin{pmatrix}f& \phi\\
		g &  f \end{pmatrix}},$ then
\begin{align*}
max\{\|f\|_{\infty},\|\psi\|_{\infty}\}\leq\|R_H\|_{e}.
\end{align*}
\end{corollary}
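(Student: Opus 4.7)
The plan is to realize the symbol map $\rho$ as a factor of an algebra homomorphism through the Calkin algebra: since $\rho$ kills the compact operators and is contractive on $\mathfrak{R}_{L^\infty}$, it must descend to a contraction on the Calkin image of $\mathfrak{R}_{L^\infty}$, which is exactly the statement of the first inequality.

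The concrete steps, in order, would be the following. First, I would show the key inclusion $\mathcal{K}(L^2(\mathbb{T})) \subseteq \mathfrak{SR}_{L^\infty}$. This is immediate from the earlier work: Lemma \ref{K} gives $\mathfrak{SR}_{C(\mathbb{T})} = \mathcal{K}(L^2(\mathbb{T}))$, and the obvious inclusion $\mathfrak{SR}_{C(\mathbb{T})} \subseteq \mathfrak{SR}_{L^\infty}$ (every generator of the former is a generator of the latter) yields the conclusion. Second, by Theorem \ref{main}, $\mathfrak{SR}_{L^\infty} = \ker \rho$, so $\rho(K) = 0$ for every compact operator $K$ on $L^2(\mathbb{T})$. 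Third, recall that in the proof of Theorem \ref{main} we already observed $\|\rho(S)\| \leq \|S\|$ for every $S \in \mathfrak{R}_{L^\infty}$, directly from the defining formula \eqref{sym} since $\|k_{r\xi}\| = \|\bar{z}\bar{k}_{r\xi}\| = 1$.

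Combining these three ingredients: for any $T \in \mathfrak{R}_{L^\infty}$ and any compact $K \in \mathcal{K}(L^2(\mathbb{T}))$, the operator $T + K$ lies in $\mathfrak{R}_{L^\infty}$, and
\begin{align*}
\|\rho(T)\| = \|\rho(T+K) - \rho(K)\| = \|\rho(T+K)\| \leq \|T+K\|.
\end{align*}
Taking the infimum over $K \in \mathcal{K}(L^2(\mathbb{T}))$ gives $\|\rho(T)\| \leq \|T\|_e$, which is the first assertion.

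For the particular case with $H = \begin{pmatrix} f & \phi \\ g & \psi \end{pmatrix}$, I would apply Lemma \ref{keylem}(1) and (2) with $m = 1$ to compute
\begin{align*}
\rho(R_H)(\xi) = \bigl(f(\xi),\, \psi(\xi)\bigr) \quad \text{a.e. on } \mathbb{T},
\end{align*}
so $\|\rho(R_H)\| = \max\{\|f\|_\infty, \|\psi\|_\infty\}$, and the first part gives the claimed bound. There is no real obstacle here; the statement is essentially the observation that a contractive $*$-homomorphism that annihilates the compacts automatically factors through the Calkin algebra as a contraction, and the earlier lemmas provide every required ingredient.
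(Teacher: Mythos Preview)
Your proposal is correct and follows essentially the same approach as the paper: both arguments hinge on the inclusion $\mathcal{K}(L^2(\mathbb{T}))\subseteq\mathfrak{SR}_{L^\infty}$ (obtained via Lemma~\ref{K}) together with the properties of $\rho$ established in Theorem~\ref{main}. The paper phrases the first step as $\|\rho(T)\|=\inf_{A\in\mathfrak{SR}_{L^\infty}}\|T+A\|\leq\inf_{K\in\mathcal{K}}\|T+K\|=\|T\|_e$ using the isometric isomorphism of $\widetilde{\rho}$, whereas you use only the contractivity of $\rho$ and that $\rho$ annihilates compacts; this is a harmless cosmetic difference.
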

\begin{proof}
If $T\in \mathfrak{R}_{L^{\infty}},$ by Theorem \ref{main}, we have
\begin{align*}
\inf_{A\in \mathfrak{SR}_{L^{\infty}}}\|T+A\|=\|\rho(T)\|.
\end{align*}
On the other hand, $\mathcal{K}\subset \mathfrak{SR}_{L^{\infty}}$ by Lemma \ref{K}.
Therefore,
\begin{align*}
\inf_{A\in \mathfrak{SR}_{L^{\infty}}}\|T+A\|\leq\inf_{K\in \mathcal{K}(L^{2}(\mathbb{T}))}\|T+K\|=\|T\|_{e}.
\end{align*}
Use Theorem \ref{main} again,
\begin{align*}
\|\rho(R_H)\|=max\{\|f\|_{\infty},\|\psi\|_{\infty}\}.
\end{align*}
\end{proof}
If $T$ is a bounded linear operator on Hilbert space $H,$
$\sigma_{e}(T)$ denotes the essential spectrum of $T.$
For $\varphi\in L^{\infty},\emph{Ran}_{ess}{\varphi}$ denotes
the essential range of $\varphi.$
If $E$ is a subset of complex plane $\mathbb{C},$ the convex hull of $E$ will be denoted by
$coE.$
Combining Theorem \ref{main} and Lemma \ref{K}, we get the following result.
\begin{corollary}\label{fredholm}
There exists a *-homomorphism $\zeta$ from the quotient algebra $\mathfrak{R}_{L^{\infty}}/\mathcal{K}$  onto $L_{2}^{\infty}(\mathbb{T})$ such that  the diagram

\[\xymatrix{
\mathfrak{R}_{L^{\infty}} \ar[rr]^{\pi}\ar[dr]_{\rho} &   & \mathfrak{R}_{L^{\infty}}/\mathcal{K}(L^{2}(\mathbb{T}))\ar[dl]^{\zeta} \\
                         & L_{2}^{\infty}(\mathbb{T}) &
}\]
commutes. Moreover,
\begin{enumerate}
\item For every $T\in \mathfrak{R}_{L^{\infty}},$  if $T$ is Fredholm, then $\rho(T)$ is invertible in
$L_{2}^{\infty}(\mathbb{T});$
\item $\emph{Ran}_{ess}{f} \cup \emph{Ran}_{ess}{\psi}\subset \sigma_{e}(R_{H}).$
\end{enumerate}
\end{corollary}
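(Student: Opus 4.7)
The plan is to obtain $\zeta$ as a quotient of the symbol map $\rho$ constructed in Theorem \ref{main}. By Lemma \ref{K} we have $\mathcal{K}(L^{2}(\mathbb{T}))\subseteq\mathfrak{SR}_{L^{\infty}}$, and by Theorem \ref{main} we know $\mathfrak{SR}_{L^{\infty}}=\ker\rho$. Since the compact ideal is contained in the kernel of $\rho$, the universal property of the quotient gives a well-defined map $\zeta\colon\mathfrak{R}_{L^{\infty}}/\mathcal{K}\to L_{2}^{\infty}(\mathbb{T})$ by $\zeta(T+\mathcal{K})=\rho(T)$, and $\zeta\circ\pi=\rho$, so the triangle commutes. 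Because $\rho$ is a *-homomorphism (Theorem \ref{main}), $\zeta$ inherits the *-algebraic structure; surjectivity is visible from $\rho\bigl(R_{\mathrm{diag}(f,\psi)}\bigr)=(f,\psi)$ for any $(f,\psi)\in L_{2}^{\infty}(\mathbb{T})$.

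For part (1), if $T\in\mathfrak{R}_{L^{\infty}}$ is Fredholm, then its image $\pi(T)$ is invertible in the Calkin algebra $B(L^{2})/\mathcal{K}$, hence invertible already in the smaller $C^{*}$-algebra $\mathfrak{R}_{L^{\infty}}/\mathcal{K}$ by spectral permanence for $C^{*}$-subalgebras. Applying the *-homomorphism $\zeta$ and using $\zeta\circ\pi=\rho$ gives that $\rho(T)$ is invertible in $L_{2}^{\infty}(\mathbb{T})$, which is precisely the statement that both coordinates are invertible in $L^{\infty}(\mathbb{T})$.

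For part (2), I would proceed by the contrapositive of (1) applied to the operator $R_{H}-\lambda I$. Writing $I=R_{\scriptsize{\begin{pmatrix}1&0\\0&1\end{pmatrix}}}$ and using linearity of $R$ in its symbol shows $\rho(R_{H}-\lambda I)=(f-\lambda,\psi-\lambda)$. If $\lambda\in\mathrm{Ran}_{ess}{f}\cup\mathrm{Ran}_{ess}{\psi}$, then at least one of $f-\lambda$, $\psi-\lambda$ fails to be invertible in $L^{\infty}$, so $\rho(R_{H}-\lambda I)$ is not invertible in $L_{2}^{\infty}(\mathbb{T})$, and by (1) the operator $R_{H}-\lambda I$ cannot be Fredholm, i.e.\ $\lambda\in\sigma_{e}(R_{H})$.

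The only subtle point is the appeal to spectral permanence in the Fredholm step: one needs that invertibility of $\pi(T)$ in the full Calkin algebra forces invertibility of $\pi(T)$ in $\mathfrak{R}_{L^{\infty}}/\mathcal{K}$, so that $\zeta$ can be applied. This is standard for unital $C^{*}$-subalgebras, and $\mathfrak{R}_{L^{\infty}}/\mathcal{K}$ is indeed a unital $C^{*}$-subalgebra of the Calkin algebra (it contains $\pi(I)$). Everything else is a direct consequence of Theorem \ref{main} and Lemma \ref{K} together with the definition of essential spectrum.
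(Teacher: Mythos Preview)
Your proof is correct and follows essentially the same route as the paper, which simply states that the corollary is obtained by ``combining Theorem \ref{main} and Lemma \ref{K}'' without spelling out the details. Your write-up makes explicit the factorization of $\rho$ through the Calkin quotient (using $\mathcal{K}\subseteq\mathfrak{SR}_{L^{\infty}}=\ker\rho$), the spectral-permanence argument for part (1), and the contrapositive argument for part (2); these are exactly the steps implicit in the paper's one-line justification.
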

Recall the spectral inclusion theorem of Toeplitz operator\cite{Douglas1998},
\begin{align}\label{inclusion}
\emph{Ran}_{ess}{f} \subset \sigma_{e}(T_{f})\subset \sigma(T_{f})\subset co\emph{Ran}_{ess}{f}.
\end{align}
Corollary \ref{fredholm} give the first inclusion similar to \eqref{inclusion}, the next theorem will show the third inclusion similar to \eqref{inclusion}.

\begin{proposition}
Let $H=\scriptsize{\begin{pmatrix}f_{1}& \phi\\
		g &  f_{2} \end{pmatrix}}\in L^{2\times2}_{\infty}(\mathbb{T}).$ If
we define
\[\mathcal{G}_{i}=co\emph{Ran}_{ess}{f_{i}}\cup\{\lambda\notin\emph{Ran}_{ess}{f_{i}}:
d_{i}(\lambda)\leq\delta\|(f_{i}-\lambda)^{-1}\|_{\infty}\}\]
where
\begin{align*}
d_{i}(\lambda)=(1-dist((f_{i}-\lambda)/|f_{i}-\lambda|,H^{\infty})^{2})^{1/2},
\quad \delta=\min \{dist(\bar{\phi},H^{\infty}),dist(g,H^{\infty})\}
\end{align*}
for $n=1,2,$ then
\begin{align*}
\sigma(R_{H})\subset \mathcal{G}_{1}\cup\mathcal{G}_{2}.
\end{align*}
\end{proposition}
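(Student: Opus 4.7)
The plan is to prove the contrapositive: if $\lambda\notin\mathcal{G}_{1}\cup\mathcal{G}_{2}$, then $R_{H}-\lambda I$ is invertible on $L^{2}(\mathbb{T})$. Unpacking the hypothesis, for $i=1,2$ we have both $\lambda\notin co\,\emph{Ran}_{ess}(f_{i})$ and $d_{i}(\lambda)>\delta\,\|(f_{i}-\lambda)^{-1}\|_{\infty}$. The first condition, via the Brown-Halmos spectral inclusion \eqref{inclusion}, yields invertibility of both pivots $T_{f_{1}-\lambda}$ and $\tilde T_{f_{2}-\lambda}$ appearing in the matrix representation \eqref{m} of $R_{H}-\lambda I$. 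Combining the second condition with the classical quantitative form of the Widom--Devinatz invertibility estimate
\begin{align*}
\|T_{f_{i}-\lambda}^{-1}\|\leq \|(f_{i}-\lambda)^{-1}\|_{\infty}\big/d_{i}(\lambda),
\end{align*}
sharpens this to $\|T_{f_{i}-\lambda}^{-1}\|<1/\delta$. Nehari's theorem identifies $\|H_{g}\|=dist(g,H^{\infty})$ and $\|H^{*}_{\bar\phi}\|=dist(\bar\phi,H^{\infty})$, so every block of $R_{H}-\lambda I$ is now quantitatively under control.

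Next I would perform a Schur-complement factorization pivoting on $T_{f_{1}-\lambda}$,
\begin{align*}
R_{H}-\lambda I=\begin{pmatrix}I & 0\\ H_{g}T_{f_{1}-\lambda}^{-1} & I\end{pmatrix}\begin{pmatrix}T_{f_{1}-\lambda} & H^{*}_{\bar\phi}\\ 0 & S\end{pmatrix},\quad S=\tilde T_{f_{2}-\lambda}-H_{g}T_{f_{1}-\lambda}^{-1}H^{*}_{\bar\phi},
\end{align*}
so that invertibility of $R_{H}-\lambda I$ is equivalent to invertibility of the Schur complement $S$. Writing $S=\tilde T_{f_{2}-\lambda}\bigl(I-\tilde T_{f_{2}-\lambda}^{-1}H_{g}T_{f_{1}-\lambda}^{-1}H^{*}_{\bar\phi}\bigr)$, a Neumann series argument concludes the proof as soon as $\|\tilde T_{f_{2}-\lambda}^{-1}H_{g}T_{f_{1}-\lambda}^{-1}H^{*}_{\bar\phi}\|<1$.

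The step I expect to be the main obstacle is sharpening the Neumann bound. The crude submultiplicative estimate only delivers $dist(g,H^{\infty})\cdot dist(\bar\phi,H^{\infty})/\delta^{2}$, which, because $\delta$ is the \emph{minimum} of the two Hankel distances, need not fall below $1$. Closing the inequality requires exploiting that only the Hankel operator realizing $\delta$ genuinely governs the perturbation: one pivots on whichever of $T_{f_{1}-\lambda},\tilde T_{f_{2}-\lambda}$ pairs naturally with the minimizing Hankel, subtracts the best $H^{\infty}$-approximant of the non-minimizing symbol before inserting it into the Neumann perturbation, and absorbs the remaining Toeplitz-type factor against a Wiener--Hopf factorization of the corresponding $f_{i}-\lambda$ (available because $\lambda\notin co\,\emph{Ran}_{ess}(f_{i})$ and $d_{i}(\lambda)>0$). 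After this rearrangement both Hankel factors in the final product collapse to $\delta$, turning the estimate into something strictly smaller than $1$. Setting up this delicate substitution-and-absorption scheme is the technical heart of the argument.
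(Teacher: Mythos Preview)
Your Schur-complement route creates the product $\|\tilde T_{f_{2}-\lambda}^{-1}\|\,\|H_{g}\|\,\|T_{f_{1}-\lambda}^{-1}\|\,\|H^{*}_{\bar\phi}\|$, and as you correctly diagnose, this cannot be pushed below $1$ using only $\|T_{f_{i}-\lambda}^{-1}\|<1/\delta$, because both Hankel norms appear multiplicatively while $\delta$ is their minimum. The ``substitution-and-absorption scheme'' you sketch in the last paragraph is not a proof: subtracting a best $H^{\infty}$-approximant from, say, $\bar\phi$ changes $H^{*}_{\bar\phi}$ not at all (the Hankel operator already ignores the $H^{\infty}$ part), and invoking a Wiener--Hopf factorization of $f_{i}-\lambda$ does not lower the operator norm of $T_{f_{i}-\lambda}^{-1}$ below what the Widom--Devinatz bound already gives. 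So the obstacle is real and your proposed repair does not close it.

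The paper avoids the product entirely by a different factorization. Instead of a Schur complement, write
\[
R_{H}-\lambda I=\begin{pmatrix}T_{f_{1}-\lambda}&0\\0&\tilde T_{f_{2}-\lambda}\end{pmatrix}\Bigl(I+\begin{pmatrix}0&T_{f_{1}-\lambda}^{-1}H^{*}_{\bar\phi}\\\tilde T_{f_{2}-\lambda}^{-1}H_{g}&0\end{pmatrix}\Bigr).
\]
The off-diagonal perturbation has norm $\max\{\|T_{f_{1}-\lambda}^{-1}H^{*}_{\bar\phi}\|,\|\tilde T_{f_{2}-\lambda}^{-1}H_{g}\|\}$, so the two Hankel norms decouple: one obtains invertibility whenever $\|H_{\bar\phi}\|<\|T_{f_{1}-\lambda}^{-1}\|^{-1}$ and $\|H_{g}\|<\|\tilde T_{f_{2}-\lambda}^{-1}\|^{-1}$. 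This pairs $f_{1}$ with $\|H_{\bar\phi}\|$ and $f_{2}$ with $\|H_{g}\|$. The second idea you are missing is to repeat the same computation for $R_{H}^{*}$ and use $\sigma(R_{H})=\overline{\sigma(R_{H}^{*})}$: in the adjoint the off-diagonal Hankel blocks swap, so now $f_{1}$ pairs with $\|H_{g}\|$ and $f_{2}$ with $\|H_{\bar\phi}\|$. Combining the two spectral inclusions lets one replace each Hankel norm by $\delta=\min\{\|H_{\bar\phi}\|,\|H_{g}\|\}$, after which the Nikolski lower bound for $\|T_{f_{i}-\lambda}^{-1}\|^{-1}$ and the Brown--Halmos inclusion finish the job. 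The diagonal factorization (giving a max) together with the adjoint trick (swapping the pairings) is the mechanism that produces $\delta$; no Wiener--Hopf manipulation is needed.
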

\begin{proof}
Suppose $\lambda \in \rho(T_{f_{1}})\cap \rho(\tilde{T}_{f_{2}}),$ we have
\begin{align*}
R_{H}-\lambda I_{L^2}
=&\begin{pmatrix}T_{f_{1}-\lambda}& H^{*}_{\bar {\phi}}\\
		H_{g} &   \tilde{T}_{f_{2}-\lambda} \end{pmatrix}\\
=&\begin{pmatrix}T_{f_{1}-\lambda}& 0\\
		0 &   \tilde{T}_{f_{2}-\lambda} \end{pmatrix}
+\begin{pmatrix}0& H^{*}_{\bar {\phi}}\\
		H_{g} &  0 \end{pmatrix}\\
=&\begin{pmatrix}T_{f_{1}-\lambda}& 0\\
		0 &   \tilde{T}_{f_{2}-\lambda} \end{pmatrix}
\bigg(I_{L^2}+\begin{pmatrix}0& T^{-1}_{f_{1}-\lambda}H^{*}_{\bar {\phi}}\\
		\tilde{T}^{-1}_{f_{2}-\lambda}H_{g} &   0 \end{pmatrix}\bigg).
\end{align*}
If $\|H_{\bar{\phi}}\|<\|T^{-1}_{f_{1}-\lambda}\|^{-1}$
and $\|H_{g}\|<\|\tilde{T}^{-1}_{f_{2}-\lambda}\|^{-1},$ then
\begin{align*}
\bigg\|\begin{pmatrix}0& T^{-1}_{f_{1}-\lambda}H^{*}_{\bar {\phi}}\\
		\tilde{T}^{-1}_{f_{2}-\lambda}H_{g} &   0 \end{pmatrix}\bigg\|
=\max\{\|T^{-1}_{f_{1}-\lambda}H^{*}_{\bar {\phi}}\|,\|\tilde{T}^{-1}_{f_{2}-\lambda}H_{g}\|\}<1,
\end{align*}
and $\lambda \in \rho (R_{H}).$
This mean that
\begin{align*}
\{\lambda\in\rho(T_{f_{1}}):\|H_{\bar{\phi}}\|<\|T^{-1}_{f_{1}-\lambda}\|^{-1}\}
\cap
\{\lambda\in\rho(\tilde{T}_{f_{2}}):\|H_{g}\|<\|\tilde{T}^{-1}_{f_{2}-\lambda}\|^{-1}\}
\subset \rho (R_{H})
\end{align*}
or
\begin{equation}\label{R}
\begin{split}
\sigma (R_{H})\subset &\sigma(T_{f_{1}})\cup \{\lambda\in\rho(T_{f_{1}}):\|T^{-1}_{f_{1}-\lambda}\|^{-1}\leq\|H_{\bar{\phi}}\|\}\\
&\cup \sigma(\tilde{T}_{f_{2}})\cup
\{\lambda\in\rho(\tilde{T}_{f_{2}}):\|\tilde{T}^{-1}_{f_{2}-\lambda}\|^{-1}\leq\|H_{g}\|\}.
\end{split}
\end{equation}
Repeat the above reasoning for $R^{*}_{H},$ we have
\begin{align*}
\sigma (R^{*}_{H})\subset &\sigma(T_{\bar{f_{1}}})\cup \{\lambda\in\rho(T_{\bar{f_{1}}}):\|T^{-1}_{\bar{f_{1}}-\lambda}\|^{-1}\leq\|H_{g}\|\}\\
&\cup \sigma(\tilde{T}_{\bar{f_{2}}})\cup
\{\lambda\in\rho(\tilde{T}_{\bar{f_{2}}}):\|\tilde{T}^{-1}_{\bar{f_{2}}-\lambda}\|^{-1}\leq\|H_{\bar{\phi}}\|\}.
\end{align*}
Taking conjugates, we get
\begin{align*}
\sigma (R_{H})\subset &\sigma(T_{f_{1}})\cup \{\bar{\lambda}\in\rho(T_{\bar{f_{1}}}):\|T^{-1}_{\bar{f_{1}}-\bar{\lambda}}\|^{-1}\leq\|H_{g}\|\}\\
&\cup \sigma(\tilde{T}_{\bar{f_{2}}})\cup
\{\bar{\lambda}\in\rho(\tilde{T}_{\bar{f_{2}}}):\|\tilde{T}^{-1}_{\bar{f_{2}}-\bar{\lambda}}\|^{-1}\leq\|H_{\bar{\phi}}\|\}.
\end{align*}
Since $\|T^{-1}_{\bar{f_{1}}-\bar{\lambda}}\|
=\|(T^{-1}_{f_{1}-\lambda})^{*}\|=\|(T^{-1}_{f_{1}-\lambda})\|$ and
$\|\tilde{T}^{-1}_{\bar{f_{2}}-\bar{\lambda}}\|
=\|(\tilde{T}^{-1}_{f_{2}-\lambda})^{*}\|=\|(\tilde{T}^{-1}_{f_{2}-\lambda})\|,$ it follows that
\begin{equation}\label{Rstar}
\begin{split}
\sigma (R_{H})\subset &\sigma(T_{f_{1}})\cup \{\lambda\in\rho(T_{f_{1}}):\|T^{-1}_{f_{1}-\lambda}\|^{-1}\leq\|H_{g}\|\}\\
&\cup \sigma(\tilde{T}_{f_{2}})\cup
\{\lambda\in\rho(\tilde{T}_{f_{2}}):\|\tilde{T}^{-1}_{f_{2}-\lambda}\|^{-1}\leq\|H_{\bar{\phi}}\|\}.
\end{split}
\end{equation}
According the norm of Hankel operator (\cite[Theorem 1.4]{Peller2003}), we have
$\|H_{\bar{\phi}}\|=dist(\bar{\phi},H^{\infty})$ and
$\|H_{g}\|=dist(g,H^{\infty}).$
Let $\delta=\min \{dist(\bar{\phi},H^{\infty}),dist(g,H^{\infty})\}.$ We combine \eqref{R} and \eqref{Rstar}. Thus
\begin{align*}
\sigma (R_{H})\subset &\sigma(T_{f_{1}})\cup \{\lambda\in\rho(T_{f_{1}}):\|T^{-1}_{f_{1}-\lambda}\|^{-1}\leq\delta\}\\
&\cap \sigma(\tilde{T}_{f_{2}})\cap
\{\lambda\in\rho(\tilde{T}_{f_{2}}):\|\tilde{T}^{-1}_{f_{2}-\lambda}\|^{-1}\leq\delta\|\}.
\end{align*}
Since $\tilde{T}_{f_{2}}$ and $T^{*}_{f_{2}}$ are anti-unitary,
$\sigma(\tilde{T}_{f_{2}})=\sigma(T_{f_{2}})$ and $\|\tilde{T}^{-1}_{f_{2}-\lambda}\|=\|T^{-1}_{f_{2}-\lambda}\|.$
Using the \eqref{inclusion} and norm estimation of the inverse of Toeplitz operator\cite[page.125.]{nikolski2020}
\begin{align*}
\frac{(1-dist(\varphi/|\varphi|,H^{\infty})^{2})^{1/2}}{\|\varphi^{-1}\|}\leq\|T^{-1}_{\varphi}\|^{-1},
\end{align*}
we have
\begin{align*}
\{\lambda\in\rho(T_{f_{1}}):\|T^{-1}_{f_{1}-\lambda}\|^{-1}\leq\delta\}
\subset & \{\lambda\notin\emph{Ran}_{ess}{f_{1}}:
d_{1}(\lambda)\leq\delta\|(f_{1}-\lambda)^{-1}\|_{\infty}\},\\
\{\lambda\in\rho(T_{f_{2}}):\|T^{-1}_{f_{2}-\lambda}\|^{-1}\leq\delta\}
\subset &\{\lambda\notin\emph{Ran}_{ess}{f_{2}}:
d_{2}(\lambda)\leq\delta\|(f_{2}-\lambda)^{-1}\|_{\infty}\},\\
\sigma(T_{f_{1}})\subset &co\emph{Ran}_{ess}{f_{1}},\\ \quad and \quad \sigma(T_{f_{2}})\subset & co\emph{Ran}_{ess}{f_{2}},
\end{align*}
where $d_{i}(\lambda)=(1-dist((f_{i}-\lambda)/|f_{i}-\lambda|,H^{\infty})^{2})^{1/2},i=1,2.$
\end{proof}

\begin{theorem}\label{CC}
The sequence
\begin{align*}
0 \longrightarrow \mathcal{K}(L^{2}(\mathbb{T})) \longrightarrow \mathfrak{R}_{C(\mathbb{T})}
\longrightarrow C_{2}(\mathbb{T})\longrightarrow 0
\end{align*}
is a short exact sequence; that is, the quotient algebra
$\mathfrak{R}_{C(\mathbb{T})}/\mathcal{K}$
is *-isometrically isomorphic to $C_{2}(\mathbb{T}).$
\end{theorem}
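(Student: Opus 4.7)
The strategy is to restrict the symbol map $\rho:\mathfrak{R}_{L^{\infty}}\to L_{2}^{\infty}(\mathbb{T})$ of Theorem \ref{main} to the $C^{*}$-subalgebra $\mathfrak{R}_{C(\mathbb{T})}$. Since $C(\mathbb{T})\subset L^{\infty}$, we have $\mathfrak{R}_{C(\mathbb{T})}\subset\mathfrak{R}_{L^{\infty}}$ and $\mathfrak{SR}_{C(\mathbb{T})}\subset\mathfrak{SR}_{L^{\infty}}$, and Lemma \ref{K} already identifies $\mathfrak{SR}_{C(\mathbb{T})}$ with $\mathcal{K}(L^{2}(\mathbb{T}))$. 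The task reduces to showing that the image of $\rho_{C}:=\rho|_{\mathfrak{R}_{C(\mathbb{T})}}$ is exactly $C_{2}(\mathbb{T})$, its kernel is $\mathcal{K}(L^{2}(\mathbb{T}))$, and the induced quotient is isometric.

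\textbf{Main step: the decomposition.} I would first establish the $C(\mathbb{T})$-analog of the decomposition in the proof of Theorem \ref{main}: every $T\in\mathfrak{R}_{C(\mathbb{T})}$ can be written as
\[T=R_{\begin{pmatrix}\begin{smallmatrix}f&0\\0&\psi\end{smallmatrix}\end{pmatrix}}+K,\qquad f,\psi\in C(\mathbb{T}),\ K\in\mathcal{K}(L^{2}(\mathbb{T})).\]
Applying Lemma \ref{keylem}(3) (whose proof works verbatim with $\mathfrak{SR}_{C(\mathbb{T})}$ in place of $\mathfrak{SR}_{L^{\infty}}$), finite sums of products $\prod_{j}R_{H_{ij}}$ with continuous entries split as $R_{\begin{pmatrix}\begin{smallmatrix}f_{n}&0\\0&\psi_{n}\end{smallmatrix}\end{pmatrix}}+K_{n}$ where $f_{n},\psi_{n}$ are continuous (sums and products of continuous functions remain continuous) and $K_{n}\in\mathfrak{SR}_{C(\mathbb{T})}=\mathcal{K}$. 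The estimate $|f_{n}(\xi)-f_{m}(\xi)|\leq\|T_{n}-T_{m}\|$ from Lemma \ref{keylem}(1) then makes $\{f_{n}\},\{\psi_{n}\}$ Cauchy in the sup norm; completeness of $C(\mathbb{T})$ yields limits $f,\psi\in C(\mathbb{T})$, and since $\mathcal{K}$ is closed, the remainders converge to some $K\in\mathcal{K}$.

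\textbf{Finishing.} From the above, $\rho_{C}(T)=(f,\psi)\in C_{2}(\mathbb{T})$. Surjectivity is immediate since $\rho_{C}(R_{\begin{pmatrix}\begin{smallmatrix}f&0\\0&\psi\end{smallmatrix}\end{pmatrix}})=(f,\psi)$ for every $(f,\psi)\in C_{2}(\mathbb{T})$. For the kernel, $\mathcal{K}=\mathfrak{SR}_{C(\mathbb{T})}\subset\ker\rho_{C}$ by Lemma \ref{keylem}(4); conversely $\rho_{C}(T)=0$ forces $f=\psi=0$, hence $T=K\in\mathcal{K}$. Isometry of the induced quotient map follows from the chain
\[\|\rho(T)\|=\|T+\mathfrak{SR}_{L^{\infty}}\|\leq\|T+\mathcal{K}\|\leq\bigl\|R_{\begin{pmatrix}\begin{smallmatrix}f&0\\0&\psi\end{smallmatrix}\end{pmatrix}}\bigr\|=\max\{\|f\|_{\infty},\|\psi\|_{\infty}\}=\|\rho(T)\|,\]
using Theorem \ref{main}, the inclusion $\mathcal{K}\subset\mathfrak{SR}_{L^{\infty}}$, the fact that $T-R_{\begin{pmatrix}\begin{smallmatrix}f&0\\0&\psi\end{smallmatrix}\end{pmatrix}}\in\mathcal{K}$, and the orthogonal decomposition $R_{\begin{pmatrix}\begin{smallmatrix}f&0\\0&\psi\end{smallmatrix}\end{pmatrix}}=T_{f}\oplus\tilde{T}_{\psi}$ respectively.

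\textbf{Main obstacle.} The only genuinely new work is the decomposition step, which is a transcription of the corresponding argument in Theorem \ref{main} into the continuous setting. The key verification is that continuity survives the induction in Lemma \ref{keylem}(3) and the sup-norm limits; both are routine because $C(\mathbb{T})$ is a closed subalgebra of $L^{\infty}(\mathbb{T})$. Everything else is bookkeeping with already-proved structural results.
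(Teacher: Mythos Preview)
Your proposal is correct and follows essentially the same route as the paper: both rerun the decomposition argument of Theorem \ref{main} inside the continuous setting, invoking Lemma \ref{K} to replace $\mathfrak{SR}_{C(\mathbb{T})}$ by $\mathcal{K}(L^{2}(\mathbb{T}))$ and noting that $C(\mathbb{T})$ is closed in $L^{\infty}(\mathbb{T})$ so the limits $f,\psi$ stay continuous. Your explicit chain of inequalities for the isometry is a welcome addition---the paper simply asserts the $*$-isometric isomorphism (implicitly relying on injective $*$-homomorphisms between $C^{*}$-algebras being isometric), whereas you verify it directly.
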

\begin{proof}
Using the proof of Theorem \ref{main} and Lemma \ref{K} , for every operator $T\in\mathfrak{R}_{C(\mathbb{T})}$ have the following form
\begin{align}\label{CB}
T=R_{\begin{pmatrix}
\begin{smallmatrix}
 f & 0 \\
0 & \psi
\end{smallmatrix}
\end{pmatrix}}+K,\quad f,\psi\in C(\mathbb{T}), K\in\mathcal{K}.
\end{align}
The map $\tilde{\rho}$ defined in \eqref{map} is  *-isometrically isomorphic
from $\mathfrak{R}_{C(\mathbb{T})}/\mathcal{K}(L^{2}(\mathbb{T}))$ to $C_{2}(\mathbb{T}).$
\end{proof}
\begin{remark}
In fact, the previous theorem can be extend to the algebra $QC.$
The sequence
\begin{align*}
0 \longrightarrow \mathcal{K}(L^{2}(\mathbb{T})) \longrightarrow \mathfrak{R}_{QC}
\longrightarrow QC_{2}\longrightarrow 0
\end{align*}
is a short exact sequence. The proof is similar in spirit to Theorem \ref{CC}.
\end{remark}
\begin{corollary}
For every $T\in \mathfrak{R}_{QC},$ we have
\begin{align*}
\|\rho(T)\|=\|T\|_{e}.
\end{align*}
In particular, if $H=\scriptsize{\begin{pmatrix}f& \phi\\
		g &  f \end{pmatrix}}\in QC_{2\times 2},$  then
\begin{align*}
max\{\|f\|_{\infty},\|\psi\|_{\infty}\}=\|R_H\|_{e}.
\end{align*}
\end{corollary}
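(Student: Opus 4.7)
The plan is to derive both claimed equalities directly from the short exact sequence
\[0\longrightarrow \mathcal{K}(L^{2}(\mathbb{T}))\longrightarrow \mathfrak{R}_{QC}\longrightarrow QC_{2}\longrightarrow 0\]
asserted in the remark immediately preceding this corollary, combined with the elementary radial-limit computation of $\rho$ on a single GSIO already provided by Lemma \ref{keylem}. The argument will be short because all of the analytic work has been done elsewhere.

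For the general identity $\|\rho(T)\|=\|T\|_{e}$ on $\mathfrak{R}_{QC}$, I would invoke that remark (whose proof mirrors Theorem \ref{CC} and relies on Lemma \ref{K} to identify $\mathfrak{SR}_{QC}$ with $\mathcal{K}(L^{2}(\mathbb{T}))$) to obtain a $C^{*}$-isomorphism $\widetilde{\rho}\colon \mathfrak{R}_{QC}/\mathcal{K}(L^{2}(\mathbb{T}))\to QC_{2}$. Since every $*$-isomorphism between $C^{*}$-algebras is automatically isometric, for each $T\in\mathfrak{R}_{QC}$ I can write
\[\|\rho(T)\|=\|\widetilde{\rho}(T+\mathcal{K})\|_{QC_{2}}=\|T+\mathcal{K}\|_{\mathfrak{R}_{QC}/\mathcal{K}}=\inf_{K\in\mathcal{K}(L^{2}(\mathbb{T}))}\|T+K\|=\|T\|_{e}.\]
This strengthens the preceding general corollary, which gave only $\|\rho(T)\|\le\|T\|_{e}$ for $T\in\mathfrak{R}_{L^{\infty}}$; the upgrade to equality is precisely the content of the collapse $\mathfrak{SR}_{QC}=\mathcal{K}(L^{2}(\mathbb{T}))$ recorded in Lemma \ref{K}.

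For the particular statement, I would specialize to $T=R_{H}$ with $H\in QC_{2\times 2}$ and compute $\rho(R_{H})$ from its definition \eqref{sym}. Lemma \ref{keylem}(1) and (2) with $m=1$ give the radial limits
\[\lim_{r\to 1^{-}}\langle R_{H}k_{r\xi},k_{r\xi}\rangle=f(\xi),\qquad \lim_{r\to 1^{-}}\langle R_{H}\bar{z}\bar{k}_{r\xi},\bar{z}\bar{k}_{r\xi}\rangle=\psi(\xi)\]
for almost every $\xi\in\mathbb{T}$, so that $\rho(R_{H})=(f,\psi)\in QC_{2}$. Because the norm on $L^{\infty}_{2}(\mathbb{T})$ is the coordinate-wise maximum, $\|\rho(R_{H})\|=\max\{\|f\|_{\infty},\|\psi\|_{\infty}\}$, and combining with the first part yields the asserted identity.

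I do not anticipate a genuine analytic obstacle, since all of the hard work has been packaged into the preceding remark and into Lemma \ref{K}. The one conceptual point worth emphasizing is that the equality (rather than only the inequality obtained at the $L^{\infty}$ level) depends on the full identity $\mathfrak{SR}_{QC}=\mathcal{K}(L^{2}(\mathbb{T}))$, not merely on the inclusion $\mathcal{K}\subseteq\mathfrak{SR}_{QC}$: only with the full identity does the Calkin-type quotient $\mathfrak{R}_{QC}/\mathcal{K}$ coincide with $\mathfrak{R}_{QC}/\mathfrak{SR}_{QC}$, so that the $C^{*}$-algebraic isometry of $\widetilde{\rho}$ directly computes the essential norm.
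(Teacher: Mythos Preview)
Your proposal is correct and is precisely the argument the paper leaves implicit: the corollary is stated without proof, immediately after the remark extending Theorem~\ref{CC} to $QC$, and your derivation from the short exact sequence together with Lemma~\ref{K} and Lemma~\ref{keylem} is exactly the intended one-line deduction. There is nothing to add.
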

\begin{corollary}\label{QC}
If $H=\scriptsize{\begin{pmatrix}f& \phi\\
		g &  f \end{pmatrix}}\in QC_{2\times 2},$ then $\sigma_{e}(R_{H})=\emph{Ran}_{ess}{f} \cup \emph{Ran}_{ess}{\psi}.$
Moreover, $R_H$ is Fredholm if and only if $f$ and $\psi$ are
invertible in $QC.$
\end{corollary}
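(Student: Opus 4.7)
The plan is to deduce both statements from the extension of Theorem \ref{CC} to $QC$ stated in the preceding remark, namely the short exact sequence
\begin{align*}
0 \longrightarrow \mathcal{K}(L^{2}(\mathbb{T})) \longrightarrow \mathfrak{R}_{QC} \longrightarrow QC_{2} \longrightarrow 0,
\end{align*}
together with the standard spectral permanence principle for unital $C^{*}$-subalgebras. The exact sequence says $\mathfrak{R}_{QC}/\mathcal{K}$ is $*$-isometrically isomorphic to $QC\oplus QC$ via the map induced by $\rho$, and under this isomorphism the class $\pi(R_{H})$ corresponds to the pair $(f,\psi)$.

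First I would compute the spectrum of $(f,\psi)$ in $QC\oplus QC$. Since $QC$ is a unital $C^{*}$-subalgebra of $L^{\infty}(\mathbb{T})$, spectral permanence gives $\sigma_{QC}(f)=\sigma_{L^{\infty}}(f)=\mathrm{Ran}_{ess}f$ and likewise for $\psi$. Because spectrum in a direct sum is the union of the component spectra, $\sigma_{QC_{2}}((f,\psi))=\mathrm{Ran}_{ess}f\cup\mathrm{Ran}_{ess}\psi$.

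Next I would transfer this back to the Calkin algebra. The $*$-isomorphism induced by $\rho$ identifies $\mathfrak{R}_{QC}/\mathcal{K}$ with $QC_{2}$, so the spectrum of $\pi(R_{H})$ inside $\mathfrak{R}_{QC}/\mathcal{K}$ equals $\mathrm{Ran}_{ess}f\cup\mathrm{Ran}_{ess}\psi$. Applying spectral permanence once more, this time to the inclusion $\mathfrak{R}_{QC}/\mathcal{K}\subset \mathcal{B}(L^{2})/\mathcal{K}$ of unital $C^{*}$-algebras, the spectrum of $\pi(R_{H})$ in the Calkin algebra (which is $\sigma_{e}(R_{H})$ by definition) coincides with its spectrum in $\mathfrak{R}_{QC}/\mathcal{K}$. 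Combining the two identifications yields the announced formula $\sigma_{e}(R_{H})=\mathrm{Ran}_{ess}f\cup\mathrm{Ran}_{ess}\psi$.

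Finally, the Fredholm characterization is immediate from Atkinson's theorem: $R_{H}$ is Fredholm iff $\pi(R_{H})$ is invertible in the Calkin algebra iff $0\notin\sigma_{e}(R_{H})$ iff $0\notin\mathrm{Ran}_{ess}f\cup\mathrm{Ran}_{ess}\psi$, i.e. iff both $f$ and $\psi$ are invertible in $L^{\infty}$, which, by $C^{*}$-permanence of $QC\subset L^{\infty}$, is equivalent to invertibility of $f$ and $\psi$ in $QC$. The only conceptual point worth double-checking is the two invocations of spectral permanence; once these are in place, the corollary is essentially a bookkeeping consequence of the exact sequence. No hard estimate is needed.
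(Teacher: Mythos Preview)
Your argument is correct and is exactly the intended derivation: the paper states this corollary without proof, leaving it as an immediate consequence of the $QC$ short exact sequence in the preceding remark together with the identification $\rho(R_H)=(f,\psi)$, and your use of spectral permanence for the two unital $C^{*}$-inclusions $QC\subset L^{\infty}$ and $\mathfrak{R}_{QC}/\mathcal{K}\subset \mathcal{B}(L^{2})/\mathcal{K}$ is precisely the standard way to make this explicit.
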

\begin{remark}\label{cf}
If $H=\scriptsize{\begin{pmatrix}f& \phi\\
		g &  f \end{pmatrix}}\in C(\mathbb{T})_{2\times 2}$, then $\sigma_{e}(R_{H})=f(\mathbb{T}) \cup \psi(\mathbb{T}) .$
\end{remark}
\begin{definition}
Let $f$ is an invertible function in $C(\mathbb{T}),$ the winding number of $f$ about the origin is defined by
\[\sharp(f)=\frac{1}{2\pi i}\int_{f(\mathbb{T})}\frac{dz}{z}.\]
\end{definition}
\begin{definition}
Let $T$ be a bounded linear operator on Hilbert space $H,$ a bounded linear operator
$B$ on $H$ is called the regularizer of $T$ if $BT-I$ and $TB-I$ are compact.
If $T$ is Fredholm, the difference
$\operatorname{ind} T=\operatorname{dim} \operatorname{ker} T-\operatorname{dim} \operatorname{ker} T^{*}$ is call the index of $T.$
\end{definition}
\begin{corollary}\label{i}
If $T$ is Fredholm operator in $\mathfrak{R}_{C(\mathbb{T})},$ then
\begin{enumerate}
  \item $R_{\tiny\begin{pmatrix}
\begin{smallmatrix}
 f_{0}^{-1} & 0 \\
0 & \psi_{0}^{-1}
\end{smallmatrix}
\end{pmatrix}}$ is a regularizer of $T;$
\item $ind (T)=\sharp(\psi_{0})-\sharp(f_{0}),$
\end{enumerate}
where $f_{0}(\xi)=\lim_{r\rightarrow 1^{-}}\langle Tk_{r\xi},k_{r\xi}\rangle,
\psi_{0}(\xi)=\lim_{r\rightarrow 1^{-}}\langle T\bar{z}\bar{k}_{r\xi},\bar{z}\bar{k}_{r\xi}\rangle.$

In particular, if $H=\scriptsize{\begin{pmatrix}f& \phi\\
		g &  \psi \end{pmatrix}}\in C(\mathbb{T})_{2\times 2}$ and $R_{H}$ is a Fredholm operator,
then
\begin{align*}
\text{ind} (R_{H})=\sharp(\psi)-\sharp(f).
\end{align*}
\end{corollary}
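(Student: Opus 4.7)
The plan is to reduce everything to the classical Gohberg index theorem for Toeplitz operators via the short exact sequence of Theorem \ref{CC}. By that theorem together with the representation $T = R_{\begin{pmatrix}\begin{smallmatrix} f_0 & 0 \\ 0 & \psi_0 \end{smallmatrix}\end{pmatrix}} + K$ (with $f_0, \psi_0 \in C(\mathbb{T})$ and $K$ compact) established in the proof of Theorem \ref{main}, and the fact that Lemma \ref{keylem}(1)--(2) applied with $m=1$ identify $f_0(\xi) = \lim_{r\to 1^-}\langle T k_{r\xi}, k_{r\xi}\rangle$ and $\psi_0(\xi) = \lim_{r\to 1^-}\langle T\bar z \bar k_{r\xi}, \bar z \bar k_{r\xi}\rangle$, the symbol pair $(f_0,\psi_0)$ is well-defined. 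Fredholmness of $T$ forces its image in $C_2(\mathbb{T})$ to be invertible, hence $f_0^{-1}, \psi_0^{-1} \in C(\mathbb{T})$ exist.

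For part (1), I would compute the composition $R_{\begin{pmatrix}\begin{smallmatrix} f_0^{-1} & 0 \\ 0 & \psi_0^{-1} \end{smallmatrix}\end{pmatrix}} \cdot R_{\begin{pmatrix}\begin{smallmatrix} f_0 & 0 \\ 0 & \psi_0 \end{smallmatrix}\end{pmatrix}}$ using the matrix representation \eqref{m}. The off-diagonal blocks vanish and the diagonal blocks become $T_{f_0^{-1}} T_{f_0}$ and $\tilde T_{\psi_0^{-1}} \tilde T_{\psi_0}$; the semicommutator identities $T_a T_b - T_{ab} = -H^{*}_{\bar a} H_b$ and its dual version, together with Hartman's theorem (Hankel operators with continuous symbol are compact), show that each diagonal block differs from the identity by a compact operator. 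Hence $R_{\begin{pmatrix}\begin{smallmatrix} f_0^{-1} & 0 \\ 0 & \psi_0^{-1} \end{smallmatrix}\end{pmatrix}} T - I$ and its right-hand analogue are compact, so the claimed regularizer works.

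For part (2), the regularizer property from (1) plus compactness-invariance of the Fredholm index give $\operatorname{ind}(T) = \operatorname{ind}\bigl(R_{\begin{pmatrix}\begin{smallmatrix} f_0 & 0 \\ 0 & \psi_0 \end{smallmatrix}\end{pmatrix}}\bigr) = \operatorname{ind}(T_{f_0}) + \operatorname{ind}(\tilde T_{\psi_0})$. The classical Gohberg formula yields $\operatorname{ind}(T_{f_0}) = -\sharp(f_0)$. For the dual-Toeplitz summand I would use the unitary $U$ from Section 2: the computation already carried out in Section 2 gives $U \tilde T_{\psi_0} U = T_{\tilde\psi_0}$ with $\tilde\psi_0(z) = \psi_0(\bar z)$, and since $z \mapsto \bar z$ reverses the orientation of $\mathbb{T}$ we have $\sharp(\tilde\psi_0) = -\sharp(\psi_0)$. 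Thus $\operatorname{ind}(\tilde T_{\psi_0}) = -\sharp(\tilde\psi_0) = \sharp(\psi_0)$, and adding proves the formula. The ``in particular'' statement for $R_H$ with continuous entries is immediate, since Lemma \ref{keylem}(1)--(2) give $f_0 = f$ and $\psi_0 = \psi$ directly from the definition of $R_H$.

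The main point requiring care is the orientation bookkeeping: one must keep the sign conventions for $\sharp(f_0)$ and $\sharp(\tilde\psi_0)$ consistent, since the minus sign in $\operatorname{ind}(T_{f_0}) = -\sharp(f_0)$ together with the orientation reversal $\sharp(\tilde\psi_0) = -\sharp(\psi_0)$ is exactly what produces the asymmetry $\sharp(\psi_0) - \sharp(f_0)$ in the final formula. Beyond that, the argument is a routine assembly of the structural Theorem \ref{CC}, standard compactness of Hankel operators with continuous symbols, and the classical Toeplitz index theorem.
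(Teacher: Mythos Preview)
Your proof is correct and follows essentially the same route as the paper: representation $T=R_{\begin{pmatrix}\begin{smallmatrix} f_0 & 0\\ 0 & \psi_0\end{smallmatrix}\end{pmatrix}}+K$, regularizer via the semicommutator identities and Hartman's theorem, and then the index formula via $\operatorname{ind}(T_{f_0})+\operatorname{ind}(\tilde T_{\psi_0})$. The only difference is cosmetic: for $\operatorname{ind}(\tilde T_{\psi_0})$ the paper uses the anti-unitary $V$ (giving $\tilde T_{\psi_0}=VT^{*}_{\psi_0}V$, hence $\operatorname{ind}\tilde T_{\psi_0}=-\operatorname{ind}T_{\psi_0}$ directly) rather than your unitary $U$ plus the orientation-reversal identity $\sharp(\tilde\psi_0)=-\sharp(\psi_0)$, but both yield the same conclusion.
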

\begin{proof}
If $T\in \mathfrak{R}_{C(\mathbb{T})},$
by the formula \eqref{CB}, we have
\begin{align*}
T=R_{\begin{pmatrix}
\begin{smallmatrix}
 f_0 & 0 \\
0 & \psi_0
\end{smallmatrix}
\end{pmatrix}}+K,\quad f_{0},\psi_{0}\in C(\mathbb{T}), \quad K\in\mathcal{K}(L^{2}(\mathbb{T})).
\end{align*}
where $f_{0}(\xi)=\lim_{r\rightarrow 1^{-}}\langle Tk_{r\xi},k_{r\xi}\rangle,
\psi_{0}(\xi)=\lim_{r\rightarrow 1^{-}}\langle T\bar{z}\bar{k}_{r\xi},\bar{z}\bar{k}_{r\xi}\rangle,$
and hence $f$ and $\psi$ are invertible in $C(\mathbb{T})$ by the remark \ref{cf}.
A  calculation shows that
\begin{align*}
R_{\tiny\begin{pmatrix}
\begin{smallmatrix}
 f_{0}^{-1} & 0 \\
0 & \psi_{0}^{-1}
\end{smallmatrix}
\end{pmatrix}}T=&R_{\tiny\begin{pmatrix}
\begin{smallmatrix}
 f_{0}^{-1} & 0 \\
0 & \psi_{0}^{-1}
\end{smallmatrix}
\end{pmatrix}}R_{\begin{pmatrix}
\begin{smallmatrix}
 f_0 & 0 \\
0 & \psi_0
\end{smallmatrix}
\end{pmatrix}}
+R_{\begin{pmatrix}
\begin{smallmatrix}
 f_0 & 0 \\
0 & \psi_0
\end{smallmatrix}
\end{pmatrix}}K\\
=&\begin{pmatrix}T_{f_{0}^{-1}}& 0\\
		0 &   \tilde{T}_{\psi_{0}^{-1}} \end{pmatrix}
\begin{pmatrix}T_{f_{0}}& 0\\
		0 &   \tilde{T}_{\psi_{0}} \end{pmatrix}
+R_{\begin{pmatrix}
\begin{smallmatrix}
 f_0 & 0 \\
0 & \psi_0
\end{smallmatrix}
\end{pmatrix}}K\\
=&\begin{pmatrix}T_{f_{0}^{-1}}T_{f_{0}}& 0\\
		0 &   \tilde{T}_{\psi_{0}^{-1}} \tilde{T}_{\psi_{0}}  \end{pmatrix}+R_{\begin{pmatrix}
\begin{smallmatrix}
 f_0 & 0 \\
0 & \psi_0
\end{smallmatrix}
\end{pmatrix}}K\\
=&\begin{pmatrix}I_{H^{2}}-H^{*}_{\overline{f_{0}^{-1}}}H_{f_{0}}& 0\\
		0 &   I_{\bar{z}\overline{H^{2}}}-H_{\psi_{0}^{-1}}H^{*}_{\bar{\psi_{0}}}\end{pmatrix}
+R_{\begin{pmatrix}
\begin{smallmatrix}
 f_0 & 0 \\
0 & \psi_0
\end{smallmatrix}
\end{pmatrix}}K\\
=&I+\begin{pmatrix}-H^{*}_{\overline{f_{0}^{-1}}}H_{f_{0}}& 0\\
		0 &   -H_{\psi_{0}^{-1}}H^{*}_{\bar{\psi_{0}}}\end{pmatrix}
+R_{\begin{pmatrix}
\begin{smallmatrix}
 f_0 & 0 \\
0 & \psi_0
\end{smallmatrix}
\end{pmatrix}}K.\\
\end{align*}
Since the Hankel operator $H_{\varphi}$ is compact if and only if $\varphi\in H^{\infty}+C(\mathbb{T})$
by \cite[p.27]{Peller2003}, we have $H^{*}_{\overline{f_{0}^{-1}}}H_{f_{0}}$ and $H_{\psi_{0}^{-1}}H^{*}_{\bar{\psi_{0}}}$ are
compact, so $R_{\tiny\begin{pmatrix}
\begin{smallmatrix}
 f_{0}^{-1} & 0 \\
0 & \psi_{0}^{-1}
\end{smallmatrix}
\end{pmatrix}}T-I$ is compact, similarly,
$TR_{\tiny\begin{pmatrix}
\begin{smallmatrix}
 f_{0}^{-1} & 0 \\
0 & \psi_{0}^{-1}
\end{smallmatrix}
\end{pmatrix}}-I$ is compact.

Since the Fredholm index is stable under compact operator
perturbations\cite[p.98]{Arveson2002}, it follows that
\begin{align*}
\text{ind} (T)&= \text{ind} (R_{\begin{pmatrix}
\begin{smallmatrix}
 f_0 & 0 \\
0 & \psi_0
\end{smallmatrix}
\end{pmatrix}}+K)\\
&= \text{ind} R_{\begin{pmatrix}
\begin{smallmatrix}
 f_0 & 0 \\
0 & \psi_0
\end{smallmatrix}
\end{pmatrix}}\\
&=\text{ind} \begin{pmatrix}T_{f_{0}}& 0\\
		0 &   \tilde{T}_{\psi_{0}} \end{pmatrix}\\
&=\text{ind}~T_{f_{0}}+ \text{ind}~\tilde{T}_{\psi_{0}}.
\end{align*}
Note that
\begin{align*}
\text{ind}~\tilde{T}_{\psi_{0}}&=\dim \ker(\tilde{T}_{\psi_{0}})-\dim \ker (\tilde{T}^{*}_{\psi_{0}})\\
&=\dim \ker(VT^{*}_{\psi_{0}}V)-\dim \ker (VT_{\psi_{0}}V)\\
&=\dim \ker(T^{*}_{\psi_{0}})-\dim \ker (T_{\psi_{0}})\\
&=-\text{ind}~T_{\psi_{0}}.
\end{align*}
By the theorem \cite[7,26]{Douglas1998}, we have
$\text{ind}~T_{f_{0}}=-\sharp(f_{0})$ and $\text{ind}~\tilde{T}_{\psi_{0}}=\sharp(\psi_{0}).$
Therefore, $\text{ind}(T)=\sharp(\psi_{0})-\sharp(f_{0}).$
\end{proof}
\begin{corollary}
If $H=\scriptsize{\begin{pmatrix}f& \phi\\
		g &  \psi \end{pmatrix}}\in C(\mathbb{T})_{2\times 2},$
then $R_{H}$ is invertible
if and only if the following conditions hold:
\begin{enumerate}
\item $f$ and $\phi$ are invertible,
\item $\sharp(\psi)=\sharp(f),$ and
\item either $\operatorname{Ker}(R_{H})=\{0\}$ or $\operatorname{Ker}(R^{*}_{H})=\{0\}.$
\end{enumerate}
\end{corollary}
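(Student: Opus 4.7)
The plan is to combine the essential spectrum computation in Remark \ref{cf}, the Fredholm index formula in Corollary \ref{i}, and the general fact that a Fredholm operator with trivial kernel (or cokernel) and index zero is invertible. The argument splits into necessity and sufficiency.

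For necessity, assume $R_H$ is invertible. Then in particular $R_H$ is Fredholm, so $0 \notin \sigma_e(R_H) = f(\mathbb{T}) \cup \psi(\mathbb{T})$ by Remark \ref{cf}. This yields that $f$ and $\psi$ are invertible in $C(\mathbb{T})$, giving condition (1) (reading the intended $\psi$ in place of the printed $\phi$). Corollary \ref{i} then gives $\operatorname{ind}(R_H) = \sharp(\psi) - \sharp(f)$, and since invertibility forces $\operatorname{ind}(R_H) = 0$, condition (2) follows. Condition (3) is immediate.

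For sufficiency, assume (1), (2), and (3). By Corollary \ref{QC} (specialized to $C(\mathbb{T}) \subset QC$), the invertibility of $f$ and $\psi$ forces $R_H$ to be Fredholm. By Corollary \ref{i} and condition (2), $\operatorname{ind}(R_H) = \sharp(\psi) - \sharp(f) = 0$. Now suppose $\ker R_H = \{0\}$; then $\dim \ker R_H^* = \dim \ker R_H - \operatorname{ind}(R_H) = 0$, so $R_H$ is injective with closed range (as Fredholm operators have closed range) and dense range, hence invertible. The other case $\ker R_H^* = \{0\}$ is symmetric.

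There is no real obstacle here, since the work has already been done in the earlier results: the heart of the argument is the essential spectrum formula $\sigma_e(R_H) = f(\mathbb{T}) \cup \psi(\mathbb{T})$ (from the short exact sequence of Theorem \ref{CC}) together with the index formula. The only thing one has to be careful about is simply assembling these facts correctly and using the standard fact that for a Fredholm operator $T$ one has $\dim\ker T^{*} = \dim\ker T - \operatorname{ind}(T)$, so that zero index plus a trivial kernel on either side automatically forces invertibility.
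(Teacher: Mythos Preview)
Your proof is correct and follows essentially the same approach as the paper's own proof: both directions hinge on the Fredholm characterization from Remark \ref{cf}/Corollary \ref{QC} together with the index formula of Corollary \ref{i}, and you correctly note the evident typo in condition (1) (it should read $\psi$, not $\phi$). The only cosmetic difference is that the paper cites Corollary \ref{fredholm} rather than Remark \ref{cf} for the spectral inclusion in the necessity step, but the content is identical.
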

\begin{proof}
By Corollary \ref{fredholm}, we have $\emph{Ran}_{ess}{f} \cup \emph{Ran}_{ess}{\psi}\subset \sigma(R_{H}).$
Suppose that $R_{H}$ is invertible, then
\begin{enumerate}[(a)]
\item $f$ and $\phi$ are invertible;
\item $\operatorname{Ker}(R_{H})=\{0\}$ and $\operatorname{Ker}(R^{*}_{H})=\{0\}.$
\end{enumerate}
It follows that $\text{ind}~ (R_{H})=0.$
By Corollary \ref{i}, we have $\sharp(\psi)=\sharp(f);$

On the other hand,
if $R_{H}$ is Fredholm, then $R_{H}$ is invertible if and only if
\begin{enumerate}[(i)]
\item ind$(R_{H})=0$;
\item either $\operatorname{Ker}(R_H)=\{0\}$ or $\operatorname{Ker}(R^{*}_{H})=\{0\}.$
\end{enumerate}
By Remark \ref{cf}, $R_{H}$ is Fredholm if and only if $f$ and $\psi$ are
invertible, hence the result follows.
\end{proof}
\begin{remark}
There exist some examples showing that both of $\operatorname{Ker}(R_{H})$ and $\operatorname{Ker}(R^{*}_H)$
are nontrivial. For example, if $u$ and $\theta$ are nonconstant inner functions,
then
\[\bar{z}\overline{(H^{2}\ominus \theta H^{2})}\subseteq \operatorname{Ker} R_{\begin{pmatrix}
\begin{smallmatrix}
 u & 0 \\
0 & \theta
\end{smallmatrix}
\end{pmatrix}}\] and
\[H^{2}\ominus uH^{2}\subseteq \operatorname{Ker} R^{*}_{\begin{pmatrix}
\begin{smallmatrix}
 u & 0 \\
0 & \theta
\end{smallmatrix}
\end{pmatrix}}.\]
Let $\Delta$ is a proper subset of $\mathbb{T}$ and has positive measure, $\chi_{\Delta}$ is the characteristic function of $\Delta,$ we have
$R_{\begin{pmatrix}
\begin{smallmatrix}
 \chi_{\Delta} & \chi_{\Delta} \\
\chi_{\Delta} & \chi_{\Delta}
\end{smallmatrix}
\end{pmatrix}}=M_{\chi_{\Delta}}$ and
$\dim \ker M_{\chi_{\Delta}}=\dim \ker M^{*}_{\chi_{\Delta}}=\infty.$
\end{remark}

\section{invertible and Fredholm of GISO}
In this section, we found that GSIOs and
singular integral operators with $2\times2$ matrix symbol are equivalent after extension.

\begin{definition}\cite{bart1992matricial}
Let $T$ and $S$ are bounded operator on Hilbert space $\mathcal{H}_1$
and $\mathcal{H}_2$ respectively.
The operators $T$ and $S$ are called equivalent after extension, written
$T\stackrel{\ast}{\sim}S,$ if there exist Hilbert spaces $Z$ and $W$ such that $T \oplus I_{Z}$ and $S \oplus I_{W}$ are equivalent operators. This means that there exist invertible bounded linear operators $E$ and $F$ such that
\begin{align*}
\left(\begin{array}{ll}
T & 0 \\
0 & I_{Z}
\end{array}\right)=E\left(\begin{array}{ll}
S & 0 \\
0 & I_{W}
\end{array}\right) F.
\end{align*}
The relation $\stackrel{*}{\sim}$ is reflexive, symmetric and transitive.
\end{definition}

\begin{theorem}\cite{bart1992matricial}\label{iff}
If $T \stackrel{\star}{\sim}S,$ then
$T$ is invertible(Fredholm) if and only if $S$ is invertible (Fredholm).
\end{theorem}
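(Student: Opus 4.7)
The plan is to unfold the definition of $T\stackrel{\star}{\sim}S$ and reduce the claim to two elementary observations about direct sums with the identity. By the preceding definition there exist Hilbert spaces $Z,W$ and invertible bounded linear operators $E,F$ such that
\begin{align*}
\begin{pmatrix} T & 0\\ 0 & I_Z\end{pmatrix}
= E\begin{pmatrix} S & 0\\ 0 & I_W\end{pmatrix} F.
\end{align*}
Since invertibility and the Fredholm property of a bounded operator are both preserved under pre- and post-composition by invertible bounded operators, this identity immediately yields that $T\oplus I_Z$ is invertible (respectively Fredholm) if and only if $S\oplus I_W$ is.

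The next step is the separate observation that for any Hilbert space $Y$, the operator $T\oplus I_Y$ is invertible (respectively Fredholm) if and only if $T$ is. For invertibility this is clear from the block formula $(T\oplus I_Y)^{-1}=T^{-1}\oplus I_Y$. For Fredholmness, one notes that $\ker(T\oplus I_Y)=\ker T\oplus\{0\}$ and, by duality, $\operatorname{coker}(T\oplus I_Y)\cong \operatorname{coker}T$, so the dimensions of the kernel and cokernel coincide on both sides; in particular they are finite simultaneously, and the range is closed on one side iff it is on the other. Applying this to $Y=Z$ and to $Y=W$ completes the chain
\begin{align*}
T\text{ invertible/Fredholm}
\iff T\oplus I_Z\text{ invertible/Fredholm}
\iff S\oplus I_W\text{ invertible/Fredholm}
\iff S\text{ invertible/Fredholm}.
\end{align*}

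There is no real obstacle here: the argument is purely formal, and the only point worth checking carefully is that Fredholmness genuinely descends from $T\oplus I_Y$ to $T$, which is where one uses the decomposition of the kernel and cokernel together with closedness of the range. Since the theorem is quoted from \cite{bart1992matricial}, the paper may simply refer to that source rather than spelling out the argument, but the above two-step reduction is the natural self-contained proof.
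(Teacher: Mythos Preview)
Your argument is correct, and you anticipated the situation exactly: the paper does not supply a proof of this theorem at all but merely quotes it from \cite{bart1992matricial}. The two-step reduction you give (invertibility/Fredholmness is stable under multiplication by invertibles, and $T\oplus I_Y$ is invertible/Fredholm iff $T$ is) is the standard self-contained justification.
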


Let \begin{align*}
A=\begin{pmatrix}f& 0\\
		g &   -1 \end{pmatrix},
B=\begin{pmatrix}\varphi& -1\\
		\psi &   0 \end{pmatrix},
\end{align*}
where $f,g,\phi,\psi\in L^{\infty}(\mathbb{T}).$
Write the Cauchy singular integral operators with $2\times2$ matrix symbol
\begin{equation}\label{AB}
\begin{split}
A\mathbb{P}_{+}+B\mathbb{P}_{-}
=&\begin{pmatrix}f& 0\\
		g &   -1 \end{pmatrix}
\begin{pmatrix}P_{+}& 0\\
		0 &   P_{+} \end{pmatrix}+
\begin{pmatrix}\varphi& -1\\
		\psi &   0 \end{pmatrix}
\begin{pmatrix}P_{-}& 0\\
		0 &   P_{-} \end{pmatrix}:L_{2}^{2}(\mathbb{T})\rightarrow L_{2}^{2}(\mathbb{T}).
\end{split}
\end{equation}
\begin{theorem}\label{q}
Let $H={\tiny\begin{pmatrix}
   f& \phi\\
   g & \psi \end{pmatrix}}\in L_{2\times2}^{\infty}(\mathbb{T}),$
$R_H\stackrel{\ast}{\sim} A\mathbb{P}_{+}+B\mathbb{P}_{-}.$
\end{theorem}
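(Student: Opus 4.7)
The plan is to exhibit explicit invertible operators on $L^{2}(\mathbb{T}) \oplus L^{2}(\mathbb{T})$ that conjugate $N := A\mathbb{P}_{+} + B\mathbb{P}_{-}$ into the block operator $R_{H} \oplus I_{L^{2}}$; by the definition preceding Theorem~\ref{iff}, this yields $R_{H} \stackrel{\ast}{\sim} N$ with extension space $Z = L^{2}(\mathbb{T})$ on the $R_{H}$ side and trivial extension on the $N$ side.

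First I would compute, for $(u,v) \in L^{2}\oplus L^{2}$,
\begin{align*}
N\binom{u}{v} = \binom{fP_{+}u + \phi P_{-}u - P_{-}v}{gP_{+}u + \psi P_{-}u - P_{+}v},
\end{align*}
and observe the key identity
\begin{align*}
P_{+}(fP_{+}u + \phi P_{-}u - P_{-}v) + P_{-}(gP_{+}u + \psi P_{-}u - P_{+}v) = R_{H}u,
\end{align*}
in which the $v$-terms vanish because $P_{+}P_{-} = P_{-}P_{+} = 0$. This motivates introducing the projection-swap map
\begin{align*}
\Pi : L^{2} \oplus L^{2} \longrightarrow L^{2} \oplus L^{2}, \qquad \Pi\binom{w_{1}}{w_{2}} = \binom{P_{+}w_{1} + P_{-}w_{2}}{P_{-}w_{1} + P_{+}w_{2}},
\end{align*}
which is unitary since it simply exchanges the anti-analytic components of the two coordinates while keeping the analytic components fixed. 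A routine block computation then yields
\begin{align*}
\Pi N = \begin{pmatrix} R_{H} & 0 \\ L & -I \end{pmatrix}, \qquad L := (P_{-}f + P_{+}g)P_{+} + (P_{-}\phi + P_{+}\psi)P_{-},
\end{align*}
with $L$ bounded on $L^{2}$ because $f, g, \phi, \psi \in L^{\infty}$.

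Next I would factor
\begin{align*}
\begin{pmatrix} R_{H} & 0 \\ L & -I \end{pmatrix} = \begin{pmatrix} R_{H} & 0 \\ 0 & I \end{pmatrix} \begin{pmatrix} I & 0 \\ L & -I \end{pmatrix},
\end{align*}
observing that the right-hand factor is self-inverse (hence invertible); together with invertibility of $\Pi$, this exhibits $N$ as (plainly) equivalent to $R_{H} \oplus I_{L^{2}}$, which is precisely $R_{H} \stackrel{\ast}{\sim} N$. The main obstacle to flag in advance is locating the unitary $\Pi$ whose off-diagonal mixing collapses the two-component singular integral onto the single operator $R_{H}$; once that trick is identified the remainder is elementary block-operator algebra, and invoking Theorem~\ref{iff} immediately transfers Fredholmness and invertibility between $R_{H}$ and the matrix singular integral $A\mathbb{P}_{+} + B\mathbb{P}_{-}$, which is the intended payoff for Section~5.
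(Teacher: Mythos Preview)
Your proof is correct and essentially identical to the paper's: your unitary $\Pi$ is precisely the paper's $\begin{pmatrix}P_{+}& P_{-}\\ P_{-} & P_{+}\end{pmatrix}$, and your operator $L=(P_{-}f+P_{+}g)P_{+}+(P_{-}\phi+P_{+}\psi)P_{-}$ is exactly $R_{H_{1}}$ with $H_{1}=\begin{pmatrix}g&\psi\\ f&\phi\end{pmatrix}$, so the two factorizations coincide. The only cosmetic difference is that the paper multiplies the self-inverse lower-triangular factor on the right of $N$ rather than on the right of $R_{H}\oplus I$, which amounts to the same equivalence.
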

\begin{proof}
Let $H_{1}={\begin{pmatrix}
\begin{smallmatrix}
 g & \psi \\
f & \phi
\end{smallmatrix}
\end{pmatrix}},$ an easy computation shows that
\begin{align*}
&\begin{pmatrix}P_{+}& P_{-}\\
		P_{-} &   P_{+} \end{pmatrix}
(A\mathbb{P}_{+}+B\mathbb{P}_{-})
\begin{pmatrix}I_{L^{2}}& 0\\
		R_{H_1} &   -I_{L^{2}} \end{pmatrix}\\
=&\begin{pmatrix}P_{+}& P_{-}\\
		P_{-} &   P_{+} \end{pmatrix}
\begin{pmatrix} fP_{+}+\varphi P_{-} & -P_{-}\\
		gP_{+}+{\psi}P_{-} &  -P_{+}  \end{pmatrix}
\begin{pmatrix}I_{L^{2}}& 0\\
		R_{H_{1}} &   -I_{L^{2}} \end{pmatrix}\\
=&\begin{pmatrix}P_{+}fP_{+}+P_{+}\phi P_{-}+P_{-}gP_{+}
+P_{-}\psi P_{-}& 0\\
P_{-}fP_{+}+P_{-}\psi P_{-}
+P_{+}g P_{+}+P_{+}\psi P_{-} &   -I_{L^{2}} \end{pmatrix}
\begin{pmatrix}I_{L^{2}}& 0\\
		R_{H_{1}} &   -I_{L^{2}} \end{pmatrix}\\
=&\begin{pmatrix}R_H& 0\\
		R_{H_{1}} &   -I_{L^{2}} \end{pmatrix}
\begin{pmatrix}I_{L^{2}}& 0\\
		R_{H_{1}} &   -I_{L^{2}} \end{pmatrix}\\
=&\begin{pmatrix}R_H& 0\\
		0 &   I_{L^{2}} \end{pmatrix}.
\end{align*}
The operators
$\begin{pmatrix}P_{+}& P_{-}\\
		P_{-} &   P_{+} \end{pmatrix}$ and
$\begin{pmatrix}I_{L^{2}}& 0\\
		R_{H_{1}} &   -I_{L^{2}} \end{pmatrix}$
are invertible, and
\begin{align*}
&\begin{pmatrix}P_{+}& P_{-}\\
		P_{-} &   P_{+} \end{pmatrix}^{-1}
=\begin{pmatrix}P_{+}& P_{-}\\
		P_{-} &   P_{+} \end{pmatrix},\\
&\begin{pmatrix}I_{L^{2}}& 0\\
		R_{H_1} &   -I_{L^{2}} \end{pmatrix}^{-1}
=\begin{pmatrix}I_{L^{2}}& 0\\
		R_{H_1} &   -I_{L^{2}} \end{pmatrix}.
\end{align*}
Hence the operators $R_H$ and
$A\mathbb{P}_{+}+B\mathbb{P}_{-}$
are equivalent after extension.
\end{proof}

If $f$ and $\psi$ are invertible, then
$A$ and $B$ are invertible and
\begin{align*}
A^{-1}=\begin{pmatrix}f^{-1}& 0\\
		f^{-1}g &   -1 \end{pmatrix},\quad
B^{-1}=\begin{pmatrix}0& \psi^{-1}\\
		-1 &   \phi\psi^{-1} \end{pmatrix},
\end{align*}
In this case
\begin{align*}
&A\mathbb{P}_{+}+B\mathbb{P}_{-}\\
=&B(B^{-1}A\mathbb{P}_{+}+\mathbb{P}_{-})\\
=&B(\mathbb{P}_{+}B^{-1}A\mathbb{P}_{+}+\mathbb{P}_{+}B^{-1}A\mathbb{P}_{+}\mathbb{P}_{-}B^{-1}A\mathbb{P}_{+}+\mathbb{P}_{-}B^{-1}A\mathbb{P}_{+}
+\mathbb{P}_{-})\\
=&B(\mathbb{P}_{+}B^{-1}A\mathbb{P}_{+}(I+\mathbb{P}_{-}B^{-1}A\mathbb{P}_{+})+\mathbb{P}_{-}(\mathbb{P}_{-}B^{-1}A\mathbb{P}_{+}
+I))\\
=&B(\mathbb{P}_{+}B^{-1}A\mathbb{P}_{+}+\mathbb{P}_{-})(\mathbb{P}_{-}B^{-1}A\mathbb{P}_{+}+I)
\end{align*}
where $I+\mathbb{P}_{-}B^{-1}A\mathbb{P}_{+}$ is invertible on , the inverse is $I-\mathbb{P}_{-}B^{-1}A\mathbb{P}_{+}.$
This implies
\begin{align}\label{q1}
A\mathbb{P}_{+}+B\mathbb{P}_{-}{\sim} \mathbb{P}_{+}B^{-1}A\mathbb{P}_{+}+\mathbb{P}_{-}.
\end{align}
Moreover, under the decomposition $L^{2}_{2}(\mathbb{T})=H^{2}_{2}(\mathbb{T})\oplus (H^{2}(\mathbb{T}))^{\bot}_{2},$ we have
\begin{align*}
\mathbb{P}_{+}B^{-1}A\mathbb{P}_{+}+\mathbb{P}_{-}
=\begin{pmatrix}T_{B^{-1}A}& 0\\
	0 &   I_{(H^{2}(\mathbb{T}))^{\bot}_{2}}\end{pmatrix},
\end{align*}
where $T_{B^{-1}A}$ is a block Toeplitz operator on $H^{2}_{2}(\mathbb{T})$ and
\begin{align}\label{BA}
B^{-1}A
=\begin{pmatrix}g\psi^{-1}& -\psi^{-1}\\
		g\phi\psi^{-1} -f &   -\phi\psi^{-1} \end{pmatrix},
\end{align}
$\det B^{-1}A=-f\psi^{-1}.$
Hence,
\begin{align}\label{q2}
\mathbb{P}_{+}B^{-1}A\mathbb{P}_{+}+\mathbb{P}_{-}\stackrel{\ast}{\sim} T_{B^{-1}A}.
\end{align}

Similarly,
\begin{align*}
&A\mathbb{P}_{+}+B\mathbb{P}_{-}
=A(\mathbb{P}_{+}+\mathbb{P}_{-}A^{-1}B\mathbb{P}_{-})(\mathbb{P}_{+}A^{-1}B\mathbb{P}_{-}+I)
\end{align*}
This implies
\begin{align*}
A\mathbb{P}_{+}+B\mathbb{P}_{-}{\sim} \mathbb{P}_{+}+\mathbb{P}_{-}A^{-1}B\mathbb{P}_{-}.
\end{align*}
and
\begin{align*}
\mathbb{P}_{+}+\mathbb{P}_{-}A^{-1}B\mathbb{P}_{-}\stackrel{\ast}{\sim} \mathbb{J}T_{(A^{-1}B)^{*}}\mathbb{J}.
\end{align*}
where $\mathbb{J}(f,f)^{T}=(Jf,Jf)^{T}
=(\bar{z}\bar{f},\bar{z}\bar{f})^{T}$ for $f\in L^{2}(\mathbb{T}).$

Recall the invertibility and Fredholm of Toeplitz operators with matrix-symbols via Wiener-Hopf factorization.
\begin{definition}
A representation of the form $F=F_{-}DF_{+}$ is called Winer-Hopf factorization of the invertible matrix function $F\in L^{\infty}_{N\times N}(\mathbb{T})$
if $D=diag(z^{\kappa_{j}})^{N}_{j=1}$ with $\kappa_{j}\in\mathbb{Z},$ and if $F_{-}$ and $F_{+}$ satisfy the following conditions:
\begin{enumerate}
\item $F_{+},F^{-1}_{+} \in H_{N \times N}^{2}(\mathbb{T}),$ $F_{-},F^{-1}_{-} \in \overline{H_{N \times N}^{2}(\mathbb{T})},$
\item The operator $F^{-1}_{+}\mathbb{P}_{+}F_{+}$ is defined on the linear space of all $\mathbb{C}^{N}$-valued trigonometric polynomials, can be extended to a bounded operator on $H^{2}_{N}(\mathbb{T}).$
\end{enumerate}
\end{definition}

\begin{theorem}\cite{Simonenko}\label{infred}
Let $F \in L_{N \times N}^{\infty}(\mathbb{T})$. Then $T_{F}$ is invertible(resp. Fredholm) if and only if $F$ admits a Wiener-Hopf factorization.$F=F_{-}F_{+}$(resp. $F=F_{-}DF_{+}$).

If $T_a$ is Fredholm, then
\begin{align*}
\operatorname{dim} \operatorname{Ker} T_a=-\sum_{\kappa_{j}<0} \kappa_{j}, \quad \operatorname{dim} \operatorname{Coker} T_a=\sum_{\kappa_{j}>0} \kappa_{j} .
\end{align*}
\end{theorem}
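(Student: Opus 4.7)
My plan is to split the argument cleanly into three stages: sufficiency, explicit computation of kernel/cokernel dimensions from the diagonal part, and necessity (which will be the hard step).

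For sufficiency, starting from a Wiener--Hopf factorization $F = F_{-}DF_{+}$ I would invoke the fundamental multiplicativity rule for block Toeplitz operators: if $G \in \overline{H^{\infty}}_{N\times N}(\mathbb{T})$ then $T_{GH} = T_{G}T_{H}$ for any $H \in L^{\infty}_{N\times N}(\mathbb{T})$, and dually if $H \in H^{\infty}_{N\times N}(\mathbb{T})$ then $T_{GH} = T_{G}T_{H}$. Applying this first to the split $F = F_{-}\cdot (DF_{+})$ and then to $DF_{+} = D \cdot F_{+}$ yields the factorization $T_{F} = T_{F_{-}}T_{D}T_{F_{+}}$. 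Condition (2) in the definition of a Wiener--Hopf factorization is tailored precisely so that the formal identities $T_{F_{+}^{-1}}T_{F_{+}} = T_{F_{+}}T_{F_{+}^{-1}} = I$ and the analogous identities for $F_{-}$ extend to bounded operators; hence $T_{F_{\pm}}$ are invertible. Thus $T_{F}$ differs from $T_{D}$ only by invertible factors.

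The diagonal operator $T_{D}$ splits as the orthogonal direct sum $\bigoplus_{j=1}^{N} T_{z^{\kappa_{j}}}$ on $H^{2}_{N}(\mathbb{T})$. Each scalar Toeplitz symbol $z^{\kappa_{j}}$ gives a power of the unilateral shift (when $\kappa_{j} \geq 0$, so $\dim \ker = 0$ and $\dim \operatorname{coker} = \kappa_{j}$) or its adjoint (when $\kappa_{j} < 0$, so $\dim \ker = -\kappa_{j}$ and $\dim \operatorname{coker} = 0$). Summing, $T_{D}$ is Fredholm with the asserted kernel and cokernel dimensions, and is invertible iff every $\kappa_{j}$ vanishes, i.e.\ iff $D = I$, which is the $F = F_{-}F_{+}$ case. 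Transporting through the invertible factors $T_{F_{\pm}}$ gives the same dimensions for $T_{F}$, and in particular $T_{F}$ is invertible when $D = I$.

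The main obstacle is necessity: producing a factorization out of the mere assumption that $T_{F}$ is Fredholm. My plan would be the classical two-stage route. First, one shows Fredholmness of $T_{F}$ forces $F$ to be invertible in $L^{\infty}_{N\times N}(\mathbb{T})$; this is a nontrivial local-principle argument of Gohberg--Krupnik type, using that the coset of $F$ in $L^{\infty}/C$ must be invertible because the corresponding commutative quotient algebra detects essential invertibility. Second, one constructs $F_{-}$ and $F_{+}^{-1}$ column by column from a fundamental system of solutions of the vector Riemann--Hilbert problem $F_{-} = F \cdot F_{+}^{-1} \cdot D$ on $\mathbb{T}$; the partial indices $\kappa_{j}$ emerge from the asymptotic behavior of these solutions at $0$ and $\infty$, and can be read off from the jump matrix $F$. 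The genuinely delicate point is verifying condition (2) of the factorization, namely the boundedness of $F_{+}^{-1}\mathbb{P}_{+}F_{+}$ on $H^{2}_{N}(\mathbb{T})$; this requires Simonenko's $\Phi$-factorization machinery and is what makes necessity substantially harder than sufficiency. Once both directions and the diagonal computation are in place, the index formula $\operatorname{ind} T_{F} = -\sum_{j}\kappa_{j}$ and the individual $\dim\ker$/$\dim\operatorname{coker}$ identities follow immediately from the invertibility of $T_{F_{\pm}}$.
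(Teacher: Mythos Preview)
The paper does not prove this theorem at all: it is quoted verbatim from Simonenko \cite{Simonenko} as a known classical result and used as a black box in the subsequent theorem. There is therefore no ``paper's own proof'' to compare your proposal against.

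That said, a brief remark on your sketch. Your overall architecture (sufficiency via the factorization $T_F = T_{F_-}T_D T_{F_+}$, explicit computation for the diagonal $T_D$, and necessity via a Riemann--Hilbert construction) is the standard route. However, your sufficiency step has a genuine gap: the definition in the paper only requires $F_+, F_+^{-1} \in H^2_{N\times N}$ and $F_-, F_-^{-1} \in \overline{H^2_{N\times N}}$, not $H^\infty$. Consequently $T_{F_\pm}$ need not be bounded operators on $H^2_N$, and the identity $T_F = T_{F_-} T_D T_{F_+}$ is not available as an equality of bounded operators. What condition (2) actually guarantees is that the \emph{formal inverse} $F_+^{-1}\mathbb{P}_+ D^{-1}\mathbb{P}_+ F_-^{-1}$ (a priori densely defined) extends to a bounded operator, and one checks directly that this extension inverts $T_F$; the outer factors are never treated as individually bounded Toeplitz operators. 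Your necessity sketch is accurate in spirit, though the local-principle step you mention is not quite the right tool for showing $F \in GL(L^\infty_{N\times N})$ in the general $L^\infty$ setting; Simonenko's original argument uses a more direct approximate-identity reasoning.
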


\begin{theorem}
If $H={\tiny\begin{pmatrix}
   f& \phi\\
   g & \psi \end{pmatrix}}\in L_{2\times2}^{\infty}(\mathbb{T}),$ then $R_H$
is invertible (resp. Fredholm) if and only if
$f$ and $\psi$ are invertible in $L^{\infty}(\mathbb{T})$ and $\tiny\begin{pmatrix}g\psi^{-1}& -\psi^{-1}\\
		g\phi\psi^{-1} -f &   -\phi\psi^{-1} \end{pmatrix}$
admit a Winer-Hopf factorization $F_{-}F_{+}$(resp.$F_{-}DF_{+}$).

If $R_H$ is Fredholm, then
\begin{align*}
\operatorname{dim} \operatorname{Ker}R_H=-\sum_{k_{j}< 0} k_{j},
\quad  \operatorname{dim} \operatorname{Ker}R^{*}_H=\sum_{k_{j}> 0} k_{j}.
\end{align*}
\end{theorem}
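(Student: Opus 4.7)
The plan is to reduce the Fredholm/invertibility question for $R_H$ on $L^2(\mathbb{T})$ to the Fredholm/invertibility question for a block Toeplitz operator on $H^2_2(\mathbb{T})$, and then to invoke Simonenko's Wiener--Hopf criterion (Theorem \ref{infred}).

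First I would handle the necessity of $f,\psi$ being invertible in $L^\infty(\mathbb{T})$. By Corollary \ref{fredholm}(2),
\[
\operatorname{Ran}_{ess} f \cup \operatorname{Ran}_{ess}\psi \subset \sigma_e(R_H),
\]
so if $R_H$ is Fredholm then $0\notin \operatorname{Ran}_{ess}f \cup \operatorname{Ran}_{ess}\psi$, which is precisely the invertibility of $f$ and $\psi$ in $L^\infty(\mathbb{T})$. The same containment shows it also on the invertibility side. Hence in both directions of the theorem one may assume $f$ and $\psi$ are $L^\infty$-invertible; in particular the matrix symbols $A$ and $B$ in \eqref{AB} are invertible because $\det A=-f$ and $\det B=\psi$.

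Next I would string together the equivalences after extension. Theorem \ref{q} gives $R_H \stackrel{*}{\sim} A\mathbb{P}_{+}+B\mathbb{P}_{-}$. With $A,B$ invertible, the factorisation preceding \eqref{q1} expresses $A\mathbb{P}_{+}+B\mathbb{P}_{-}$ as the product of three invertible operators times $\mathbb{P}_{+}B^{-1}A\mathbb{P}_{+}+\mathbb{P}_{-}$, and under the decomposition $L^2_2=H^2_2\oplus (H^2)^\perp_2$ this latter operator is the direct sum of the block Toeplitz operator $T_{B^{-1}A}$ and the identity on $(H^2)^\perp_2$, with $B^{-1}A$ given explicitly by \eqref{BA}. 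Composing the equivalences and using transitivity of $\stackrel{*}{\sim}$ yields
\[
R_H \stackrel{*}{\sim} T_{B^{-1}A}.
\]
Theorem \ref{iff} then converts every statement about invertibility or Fredholmness of $R_H$ into the corresponding statement about $T_{B^{-1}A}$. At this point Theorem \ref{infred} applied to $F=B^{-1}A$ gives the Wiener--Hopf characterisation claimed in the statement.

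Finally, for the kernel/cokernel dimension formulas, I would note that equivalence after extension preserves both: if $T\oplus I_Z = E(S\oplus I_W)F$ with $E,F$ invertible, then $\dim\ker T=\dim\ker(T\oplus I_Z)=\dim\ker(S\oplus I_W)=\dim\ker S$, and likewise for cokernels. Thus $\dim\ker R_H=\dim\ker T_{B^{-1}A}$ and $\dim\ker R_H^*=\dim\operatorname{Coker}T_{B^{-1}A}$, so the partial indices $\kappa_j$ appearing in the Wiener--Hopf factorisation $B^{-1}A = F_-DF_+$ give
\[
\dim\ker R_H=-\!\!\sum_{\kappa_j<0}\!\kappa_j, \qquad \dim\ker R_H^{*}=\sum_{\kappa_j>0}\kappa_j,
\]
via the corresponding identities from Theorem \ref{infred}. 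The main obstacle is conceptual rather than technical: one must be sure that the equivalence-after-extension relations composed here really preserve both notions (invertibility and Fredholmness) and both index data, but this is exactly the content of Theorem \ref{iff} together with the stability of kernel dimension under direct summing with identities, so everything assembles without further calculation.
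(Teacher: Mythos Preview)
Your proposal is correct and follows essentially the same route as the paper: invoke Corollary~\ref{fredholm} to force $f,\psi\in (L^\infty)^{-1}$, chain Theorem~\ref{q} with \eqref{q1} and \eqref{q2} to obtain $R_H\stackrel{*}{\sim}T_{B^{-1}A}$, and then apply Theorems~\ref{iff} and~\ref{infred}. Your explicit justification that equivalence after extension preserves $\dim\ker$ and $\dim\operatorname{Coker}$ is a welcome addition, since the paper's proof simply asserts the dimension formulas without isolating this step.
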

\begin{proof}
If $R_H$ is invertible or Fredholm,
by Corollary \ref{fredholm}, we have $f$ and $\psi$ are invertible in $L^{\infty}(\mathbb{T}).$
Since the relation $\stackrel{*}{\sim}$ is transitive, combining Theorem \ref{q}, \eqref{q1}
and \eqref{q2},
it follows that $R_{H}\stackrel{*}{\sim}T_{B^{-1}A}.$
Using Theorem \ref{iff} and Theorem \ref{infred}, we get the result.
\end{proof}
\section{Applications}

\subsection{The Spectral Inclusion Theorem}

In the theory of Toeplitz operator, the spectrum of $T_{\phi}$ always includes
the essential range of $\phi.$ Corollary \ref{fredholm} shows that
\begin{align*}
\emph{Ran}_{ess}{f} \cup \emph{Ran}_{ess}{\psi}\subset \sigma(R(f,g,\phi,\psi)).
\end{align*}
Hence, for the bounded singular integral operator $R_{\alpha,\beta},$ we have
\begin{align*}
\emph{Ran}_{ess}{\alpha} \cup \emph{Ran}_{ess}{\beta}\subset \sigma(R_{\alpha,\beta}),
\end{align*}
for the bounded dual truncated Toeplitz operator $D_{\phi},$ we have
\begin{align*}
\emph{Ran}_{ess}{\phi} \subset \sigma(D_{\phi});
\end{align*}
for the bounded Foguel-Hankel operator
$
\begin{pmatrix}T_{z}^{*}& X\\
		0 &   T_{z}\end{pmatrix},$ we have
\begin{align*}
\mathbb{T} \subset \sigma \begin{pmatrix}T_{z}^{*}& X\\
		0 &   S\end{pmatrix}.
\end{align*}
Moreover, for every constant $\lambda,$ we have
\begin{align*}
\lambda I-\begin{pmatrix}T_{z}^{*}& X\\
		0 &   T_{z}\end{pmatrix}
=\begin{pmatrix}I& 0\\
		0 &   \lambda I-T_{z}\end{pmatrix}
\begin{pmatrix}I& -X\\
		0 &   I\end{pmatrix}
\begin{pmatrix}\lambda I-T_{z}^{*}& 0\\
		0 &   I\end{pmatrix}.
\end{align*}
Note that
\begin{align*}
\begin{pmatrix}I& -X\\
		0 &   I\end{pmatrix}
\end{align*} is always invertible and
\begin{align*}
\begin{pmatrix}I& -X\\
		0 &   I\end{pmatrix}^{-1}=\begin{pmatrix}I& X\\
		0 &   I\end{pmatrix}.
\end{align*}
If both of $\lambda I-T_{z}$ and $\lambda I-T_{z}^{*}$ are
invertible, then $\lambda I-\begin{pmatrix}T_{z}^{*}& X\\
		0 &   T_{z}\end{pmatrix}$ is invertible.
Therefore,\begin{align*}
\sigma \begin{pmatrix}T_{z}^{*}& X\\
		0 &   T_{z}\end{pmatrix}\subset \sigma(T_{z})=\overline{\mathbb{D} }.
\end{align*}
\subsection{Essential spectrum}
The essential spectrum of Toeplitz operator with continous symbol equals the essential range of the symbol.
Corollary \ref{QC} shows that
If $f,g,\phi,\psi\in C(\mathbb{T}),$ then \[\sigma_{e}(R(f,g,\phi,\psi))=f(\mathbb{T}) \cup \psi(\mathbb{T}).\]
Hence, for bounded singular integral operator, if $\alpha,\beta\in C(\mathbb{T}),$ then
\[\sigma_{e}(R_{\alpha,\beta})=\alpha(\mathbb{T}) \cup \beta(\mathbb{T}).\]
For bounded dual truncated Toeplitz operator, if $\varphi\in C(\mathbb{T}),$ then
\[\sigma_{e}(D_{\varphi})=\varphi(\mathbb{T}).\]
For bounded Foguel-Hankel operator, if $X=\Gamma_{\phi}$ and $\phi\in H^{\infty}+C(\mathbb{T}),$
then
\begin{align*}
\sigma_{e} \begin{pmatrix}T_{z}^{*}& X\\
		0 &   T_{z}\end{pmatrix}=\mathbb{T}.
\end{align*}

\subsection{Special cases}
We consider  one of operators $T_{f}$ and $S_{\psi}$ is invertible. In particular, suppose $S_{\psi}=I,$
Suppose that $\lambda \notin \emph{Ran}_{ess}{f} \cup \{1\}.$
Now
\begin{gather*}
\begin{pmatrix}T_{f-\lambda} & H^{*}_{\bar {\phi}}\\
		H_{g} &   I-\lambda \end{pmatrix}
=\begin{pmatrix}I & H^{*}_{\bar {\phi}}\\
		0 &   \frac{1}{1-\lambda }I \end{pmatrix}
\begin{pmatrix}T_{f-\lambda}-H^{*}_{\bar{\phi}}H_{g} & 0\\
		0 &   I \end{pmatrix}
\begin{pmatrix}I & 0\\
		(1-\lambda )H_{g} &   I \end{pmatrix}.
\end{gather*}
Since $\begin{pmatrix}I & H^{*}_{\bar {\phi}}\\
		0 &   \frac{1}{1-\lambda }I \end{pmatrix}$
and $\begin{pmatrix}I & 0\\
		(1-\lambda )H_{g} &   I \end{pmatrix}$ are always
invertible, or
$\begin{pmatrix}I & H^{*}_{\bar {\phi}}\\
		0 &   \frac{1}{1-\lambda }I \end{pmatrix}^{-1}
=\begin{pmatrix}I & -(1-\lambda)H^{*}_{\bar {\phi}}\\
		0 &   (1-\lambda)I \end{pmatrix}$
and
$\begin{pmatrix}I & 0\\
		(1-\lambda )H_{g} &   I \end{pmatrix}^{-1}
=\begin{pmatrix}I & 0\\
		-(1-\lambda )H_{g} &   I \end{pmatrix},$
it follows that \[\begin{pmatrix}T_{f-\lambda} & H^{*}_{\bar {\phi}}\\
		H_{g} &   I-\lambda \end{pmatrix}\] is invertible
if and only if
\[\begin{pmatrix}T_{f-\lambda}-H^{*}_{\bar{\phi}}H_{g} & 0\\
		0 &   I \end{pmatrix}\]
is invertible.
Therefore, we have
\[\sigma(R(f,g,\phi,1))=\sigma(T_{f}-H^{*}_{\bar{\phi}}H_{g})\cup \emph{Ran}_{ess}{f} \cup \{1\}.\]
Since $T_{f}-H^{*}_{\bar{\phi}}H_{g}=T_{f}-T_{\phi g}+T_{\phi}T_{g},$ we have
\begin{align*}
\lim_{r\rightarrow 1^{-}}\langle T_{f}-H^{*}_{\bar{\phi}}H_{g}k_{r\xi},k_{r\xi}\rangle=f(\xi)
\quad a.e. ~on ~\mathbb{T}.
\end{align*}
By Corollary \ref{fredholm}, we have $ \emph{Ran}_{ess}{f} \subset \sigma(T_{f}-H^{*}_{\bar{\phi}}H_{g}).$
Hence,\[\sigma(R(f,g,\phi,1))=\sigma(T_{f}-H^{*}_{\bar{\phi}}H_{g})\cup \{1\}.\]
Similarly,
\[\sigma(R(1,g,\phi,\psi))=\sigma(S_{\psi}-H_{g}H^{*}_{\bar {\phi}})\cup \{1\}.\]

\end{document}